\newtheorem{theorem}{Theorem}
\newtheorem{lemma}{Lemma}
\newtheorem{remark}{Remark}
\newtheorem{corollary}{Corollary}
\begin{document}
\author{G. Tephnadze}
\title[partial sums]{On the convergence of partial sums with respect to Vilenkin system on the martingale Hardy spaces}
\address{G. Tephnadze, The University of Georgia, IV, 77a Merab Kostava St, Tbilisi, 0128,  Georgia, \& Department of Engineering Sciences and Mathematics, Lule\aa University of Technology, SE-971 87 Lule\aa, Sweden.}
\email{giorgitephnadze@gmail.com}
\thanks{The research was supported by Shota Rustaveli National Science Foundation grants no. DO/24/5-100/14 and YS15-2.1.1-47, by a
Swedish Institute scholarship no. 10374-2015 and by target scientific research programs
grant for the students of faculty of Exact and Natural Sciences.}
\date{}
\maketitle

\begin{abstract}
In this paper we derive characterizations of boundedness  of the subsequences of partial sums with respect to Vilenkin system on the martingale Hardy spaces when $ 0<p<1 $. Moreover, we find necessary and sufficient conditions for the modulus of continuity of $f\in H_{p}$ martingales, which provide convergence of subsequences of partial sums on the martingale Hardy spaces. 
It is also proved that these results are the best possible in a special sense. As applications, both some well-known and new results are pointed out.
\end{abstract}

\date{}

\textbf{2010 Mathematics Subject Classification.} 42C10.

\textbf{Key words and phrases:} Vilenkin system, partial sums, martingale
Hardy space, modulus of continuity, convergence.

\section{ Introduction}

The definitions and notations used in this introduction can be found in our next
Section. It is well-known that (for details see e.g. \cite{sws})
\begin{equation*}
\left\Vert S_{n}f\right\Vert _{p}\leq c_{p}\left\Vert f\right\Vert _{p},%
\text{ \ when \ }p>1,
\end{equation*}%
where $ S_{n}f $ is $ n $-th partial sum with respect to bounded Vilenkin system.

Moreover, it can be proved also a more stronger result (for details see e.g.  \cite{gol}):
\begin{equation*}
\left\Vert S^{\ast }f\right\Vert _{p}\leq c_{p}\left\Vert f\right\Vert _{p},%
\text{ \ when \ }f\in L_{p},\text{ \ }p>1,
\end{equation*}
where
\begin{equation*}
{S}^{\ast }f:=\sup_{n\in \mathbb{N}}\left\vert
S_{n}f\right\vert.
\end{equation*}%

Lukomskii \cite{luko} proved two-side estimate
for Legesgue constants $L_{n} $ with respect to Vilenkin sistem. By using
this result we easily obtain that $S_{n_{k}}f$\ convergence to $f$ \ in $%
L_{1} $-norm, for every integrable function $f$, if and only if
\begin{equation*}
\sup_{k \in \mathbb{N}}L_{n_{k}}\leq c<\infty .
\end{equation*}

Point-wise and uniform convergence and some approximation properties of
partial sums in $L_{1} $ norm were studied by a lot of
autors. We mentioned, for instance, the paper of Goginava \cite{gog1}, Goginava and Sahakian \cite{gs} and Avdispahi\'c and Memi\'c \cite{am}. Fine \cite{fi} obtained sufficient conditions for the uniform convergence which are in
complete analogy with the Dini-Lipschits conditions. Guličev \cite{9}
estimated the rate of uniform convergence of a Walsh-Fourier series by using Lebesgue constants and modulus of continuity. Uniform convergence of
subsequence of partial sums with respect to Walsh system was investigate also in \cite{gt}. This problem for Vilenkin group $G_{m}$ was considered by Blahota \cite{2},  Fridli \cite{4} and Gát \cite{5}.

It is known (for details see e.g. \cite{tep5}) that Vilenkin system
does not form  basis in the space $L_{1}\left( G_{m}\right) .$ Moreover, there is
a function in the martingale Hardy space $H_{1}\left( G_{m}\right) ,$ such
that the partial sums of $f$ are not bounded in $L_{1}\left( G_{m}\right) $%
-norm, but subsequence $S_{M_{n}}$ of partial sums  are bounded from the
martingale Hardy space $H_{p}\left( G_{m}\right) $ to the Lebesgue space $%
L_{p}\left( G_{m}\right) ,$ for all $p>0.$

In \cite{tepthesis} it was proved that if $0<p\leq 1$
and $\left\{ \alpha _{k}:k\in \mathbb{N}\right\} $ be a increasing secuence
of nonnegative integers such that
\begin{equation}  \label{ak0}
\sup_{k\in \mathbb{N}}\rho \left( \alpha _{k}\right) <\infty,
\end{equation}
where
\begin{equation*}
\rho\left( n\right)=\left\vert n\right\vert -\left\langle n\right\rangle
\end{equation*}
and

\begin{equation*}
\left\langle n\right\rangle :=\min \{j\in \mathbb{N}:n_{j}\neq 0\}\text{,\ }%
\left\vert n\right\vert :=\max \{j\in \mathbb{N}:n_{j}\neq 0\},
\end{equation*}
for 
$n=\overset{\infty
}{\underset{k=0}{\sum }}n_{j}M_{j}, \ \ n_{j}\in Z_{m_{j}} \ \ (j\in
\mathbb{N}),$
then the restricted maximal operator
\begin{equation*}
\widetilde{S}^{\ast ,\vartriangle }f:=\sup_{k\in \mathbb{N}}\left\vert
S_{\alpha _{k}}f\right\vert
\end{equation*}%
is bounded from the Hardy space $H_{p}$ to the Lebesgue space $L_{p}.$

Moreover, if $0<p<1$ and $\left\{ \alpha _{k}:k\in \mathbb{N}\right\} $ be a
increasing secuence of nonnegative integers satisfying the condition
\begin{equation}  \label{ak}
\sup_{k\in \mathbb{N}}\rho \left( \alpha _{k}\right) =\infty,
\end{equation}
then there exists a martingale $f\in H_{p}$ such that
\begin{equation*}
\sup_{k\in \mathbb{N}}\left\Vert S_{\alpha _{k}}f\right\Vert _{L_{p,\infty
}}=\infty.
\end{equation*}

It immediately follows that for any $p>0$ and $f\in H_{p}$, the following restricted maximal operator
\begin{equation*}
\widetilde{S}_{\#}^{\ast }f:=\sup_{n\in \mathbb{N}}\left\vert
S_{M_{n}}f\right\vert,
\end{equation*}
where
$M_{0}:=1,$ $M_{k+1}:=\overset{k}{\underset{i=0}{\Pi }}m_{i}$ and $m:=(m_{0,}$ $m_{1},...)$ be a sequence of positive integers not less than 2, which generates a Vilenkin system,
is bounded from the Hardy space $H_{p}$ to the space $L_{p}$:
\begin{equation}  \label{Sk0}
\left\Vert\widetilde{S}_{\#}^{\ast }f\right\Vert_{p}\leq \left\Vert f
\right\Vert_{H_p}, \text{ \ } f\in H_{p}.
\end{equation}

For the Vilenkin system Simon \cite{si1} proved that there is an absolute
constant $c_{p},$ depending only on $p,$ such that
\begin{equation}
\overset{\infty }{\underset{k=1}{\sum }}\frac{\left\Vert S_{k}f\right\Vert
_{p}^{p}}{k^{2-p}}\leq c_{p}\left\Vert f\right\Vert _{H_{p}}^{p},
\label{1cc}
\end{equation}%
for all $f\in H_{p}\left( G_{m}\right) $, where $0<p<1.$ In \cite{tep3} we proved that sequence $\left\{ 1/k^{2-p}:k\in \mathbb{N}\right\} $ can
not be improved.

Analogical theorem for $p=1$ with respect to the unbounded Vilenkin systems was
proved in Gát \cite{gat1}.

In \cite{tep5} we proved that if $0<p<1,$ $f\in H_{p}\left( G_{m}\right) $
and%
\begin{equation*}
\omega \left( \frac{1}{M_{n}},f\right) _{H_{p}\left( G_{m}\right) }=o\left(
\frac{1}{M_{n}^{1/p-1}}\right) ,\text{ as }n\rightarrow \infty ,
\end{equation*}%
then%
\begin{equation}  \label{con1p}
\left\Vert S_{n_{k}}f-f\right\Vert _{H_{p}}\rightarrow 0,\,\,\,\text{as}%
\,\,\,k\rightarrow \infty.
\end{equation}%

Moreover, for every $p\in \left( 0,1\right) $ there exists martingale $f\in
H_{p}(G_{m})$, for which
\begin{equation*}
\omega \left( \frac{1}{M_{n}},f\right) _{H_{p}(G_{m})}=O\left( \frac{1}{%
M_{n}^{1/p-1}}\right) ,\text{ as }n\rightarrow \infty
\end{equation*}%
and
\begin{equation*}
\left\Vert S_{k}f-f\right\Vert _{L_{p,\infty
}(G_{m})}\nrightarrow 0,\,\,\,\text{as\thinspace \thinspace \thinspace }%
k\rightarrow \infty .
\end{equation*}

In \cite{tep8} we investigated some $(H_{p},H_{p}),$ $(H_{p},L_{p})$ and
$(H_{p},L_{p,\infty })$ type inequalities of subsequences of partial sums with respect to
Walsh-Fourier series for $0<p\leq 1.$

In this paper we derive characterizations of boundedness (or even the ratio of divergence of the norm) of the subsequences of partial sums with respect to Vilenkin system on the martingale Hardy spaces when $ 0<p<1 $. Moreover, we find necessary and sufficient conditions for the modulus of continuity of $f\in H_{p}$, which provide convergence of subsequences of partial sums on the martingale Hardy spaces. 
It is also proved that these results are the best possible in a special sense. As applications, both some well-known and new results are pointed out.

This paper is organized as follows: In order not to disturb our discussions later on some definitions and notations are presented in Section 2. For the proofs of the main results we need some
auxiliary Lemmas, some of them are new and of independent interest. These results are also
presented in Section 2. The formulations and detailed proofs of our main results and some of its
consequences can be found in Sections 3 and 4.  

\section{Preliminaries}

Let $\mathbb{N}_{+}$ denote the set of the positive integers, $\mathbb{N}:=%
\mathbb{N}_{+}\cup \{0\}.$

Let $m:=(m_{0,}$ $m_{1},...)$ denote a sequence of the positive integers not
less than 2. Denote by $Z_{m_{k}}:=\{0,1,...,m_{k}-1\}$ the additive group
of integers modulo $m_{k}.$ Define the group $G_{m}$ as the complete direct
product of the group $Z_{m_{j}}$ with the product of the discrete topologies
of $Z_{m_{j}}$ s. The direct product $\mu $ of the measures 
$$\mu _{k}\left(\{j\}\right) :=1/m_{k} \text{ }(j\in Z_{m_{k}})$$  
is the Haar measure on $G_{m_{%
\text{ }}}$ with $\mu \left( G_{m}\right) =1.$ 

If the sequence $m:=(m_{0,}$ 
$m_{1},...)$ is bounded than $G_{m}$ is called a bounded Vilenkin group, else
it is called an unbounded one.

The elements of $G_{m}$ are represented by sequences
\begin{equation*}
x:=(x_{0},x_{1},...,x_{j},...)\ \left( \text{ }x_{k}\in Z_{m_{k}}\right) .
\end{equation*}

It is easy to give a base for the neighborhood of $G_{m}$
\begin{eqnarray*}
I_{0}\left( x\right) &:&=G_{m}, \\
I_{n}(x) &:&=\{y\in G_{m}\mid y_{0}=x_{0},...,y_{n-1}=x_{n-1}\}\text{ }(x\in
G_{m},\text{ }n\in \mathbb{N}).
\end{eqnarray*}

Denote $I_{n}:=I_{n}\left( 0\right) $ for $n\in \mathbb{N}$ and $\overline{%
I_{n}}:=G_{m} \setminus I_{n}$. It is evident that

\begin{equation}  \label{1.1}
\overline{I_{N}}=\overset{N-1}{\underset{s=0}{\bigcup }}I_{s}\setminus
I_{s+1}.
\end{equation}

If we define the so-called generalized number system based on $m$ in the
following way $M_{0}:=1,$ $M_{k+1}:=m_{k}M_{k\text{ }}\ (k\in \mathbb{N})$
then every $n\in \mathbb{N}$ can be uniquely expressed as $n=\overset{\infty
}{\underset{k=0}{\sum }}n_{j}M_{j},$ where $n_{j}\in Z_{m_{j}}$ $~(j\in
\mathbb{N})$ and only a finite number of $n_{j}$`s differ from zero.

For all $n\in \mathbb{N}$ let us define
\begin{equation*}
\left\langle n\right\rangle :=\min \{j\in \mathbb{N}:n_{j}\neq 0\}\text{,\ }%
\left\vert n\right\vert :=\max \{j\in \mathbb{N}:n_{j}\neq 0\}\text{, \ }%
\rho\left( n\right) =\left\vert n\right\vert -\left\langle n\right\rangle .
\end{equation*}

For the natural number $n=\sum_{j=1}^{\infty }n_{j}M_{j},$ we define
functions $v$ and $v^{\ast }$ by

\begin{equation*}
v\left( n\right) =\sum_{j=1}^{\infty }\left\vert \delta _{j+1}-\delta
_{j}\right\vert +\delta _{0},\text{ \ }v^{\ast }\left( n\right)
=\sum_{j=1}^{\infty }\delta _{j}^{\ast },
\end{equation*}%
where
\begin{equation*}
\delta_{j}=signn_{j}=sign\left( \ominus n_{j}\right), \text{ \ }
\delta_{j}^{\ast }=\left\vert \ominus n_{j}-1\right\vert \delta_{j}
\end{equation*}
and $\ominus $\ is the inverse operation for $a_{k}\oplus
b_{k}:=(a_{k}+b_{k})$mod$m_{k}.$

The norms (or quasi-norms) of the spaces $L_{p}(G_{m})$ and $L_{p,\infty
}\left( G_{m}\right) $ $\left( 0<p<\infty \right) $ are respectively defined
by \qquad \qquad \thinspace\
\begin{eqnarray*}
&&\left\Vert f\right\Vert _{p}^p:=\int_{G_{m}}\left\vert f\right\vert
^{p}d\mu, \\
&&\left\Vert f\right\Vert _{L_{p,\infty}}:=\underset{\lambda >0}{\sup }\lambda
\mu \left( f>\lambda \right) ^{1/p}.
\end{eqnarray*}

Next, we introduce on $G_{m}$ an orthonormal system which is called the
Vilenkin system.

First define the complex valued function $r_{k}\left( x\right)
:G_{m}\rightarrow
\mathbb{C}
,$ the generalized Rademacher functions as
\begin{equation*}
r_{k}\left( x\right) :=\exp \left( 2\pi ix_{k}/m_{k}\right) \text{ \qquad }%
\left( \iota ^{2}=-1,\text{ }x\in G_{m},\text{ }k\in \mathbb{N}\right) .
\end{equation*}

Now define the Vilenkin system $\psi :=(\psi _{n}:n\in \mathbb{N})$ on $%
G_{m} $ as:
\begin{equation*}
\psi _{n}(x):=\overset{\infty }{\underset{k=0}{\Pi }}r_{k}^{n_{k}}\left(
x\right) \text{ \qquad }\left( n\in \mathbb{N}\right) .
\end{equation*}

Specifically, we call this system the Walsh-Paley one if $m\equiv 2,$ that is $m_k=2$ for all $ k\in \mathbb{N} $.

The Vilenkin system is orthonormal and complete in $L_{2}\left( G_{m}\right)
\,$ (for details see e.g. \cite{AVD,Vi}).

If $f\in L_{1}\left( G_{m}\right) $ we can establish the Fourier
coefficients, the partial sums of the Fourier series, the Dirichlet kernels
with respect to the Vilenkin system $\psi $ in the usual manner:
\begin{eqnarray*}
\widehat{f}\left( k\right) &:&=\int_{G_{m}}f\overline{\psi }_{k}d\mu ,\text{%
\thinspace }\text{ \ \ } \left(k\in \mathbb{N}\right) \\
S_{n}f &:&=\sum_{k=0}^{n-1}\widehat{f}\left( k\right) \psi _{k},\text{ \ \ }\left(k\in \mathbb{N}\right) \\
D_{n}&:&=\sum_{k=0}^{n-1}\psi _{k\text{ }},\text{ } \left(n\in \mathbb{N}%
_{+}\right).
\end{eqnarray*}

Recall that (for details see e.g. \cite{AVD})
\begin{equation}
\quad \hspace*{0in}D_{M_{n}}\left( x\right) =\left\{
\begin{array}{l}
M_{n},\text{ \ \ }x\in I_{n} \\
0,\text{ \ \ }x\notin I_{n}%
\end{array}%
\right.  \label{3dn}
\end{equation}%
\vspace{0pt}and

\begin{equation}
D_{n}=\psi _{n}\left( \underset{j=0}{\overset{\infty }{\sum }}D_{M_{j}}%
\overset{m_{j}-1}{\underset{u=m_{j}-n_{j}}{\sum }}r_{j}^{u}\right) .
\label{2dn}
\end{equation}

Moreover, (for details see Tephnadze \cite{tep2} and \cite{tep6}) if $n\in \mathbb{N}$ and $x\in I_{s}\setminus
I_{s+1},$ $0\leq s\leq N-1,$ then the following apper and bellow etimates are true:
\begin{equation} \label{estdn0}
\left\vert D_{n}(x)\right\vert =\left\vert D_{n-M_{\left\vert n\right\vert
}}(x)\right\vert \geq M_{\left\langle n\right\rangle }, \text{ \ \ \ \ \ }\left\vert n\right\vert \neq
\left\langle n\right\rangle 
\end{equation}
and 
\begin{equation} \label{estdn}
\int_{I_{N}}\left\vert D_{n}\left( x-t\right) \right\vert d\mu \left(
t\right) \leq \,\frac{cM_{s}}{M_{N}}.
\end{equation}

The $n$-th Lebesgue constant is defined in the following way:
\begin{equation*}
L_{n}:=\left\Vert D_{n}\right\Vert _{1}.
\end{equation*}

It is known that (see Lukomskii \cite{luko}) that for every $n=\sum_{i=1}^{\infty }n_{i}M_{i}$, the following two-side estimate is true:
\begin{equation} \label{var}
\frac{1}{4\lambda }v\left( n\right) +\frac{1}{\lambda }v^{\ast }\left(
n\right) +\frac{1}{2\lambda }\leq L_{n}\leq \frac{3}{2}v\left( n\right)
+4v^{\ast }\left( n\right) -1,
\end{equation}%
where $\lambda :=\sup_{n\in \mathbb{N}}m_{n}.$

The $\sigma $-algebra generated by the intervals $\left\{ I_{n}\left(
x\right) :x\in G_{m}\right\} $ will be denoted by $\digamma _{n}$ $\left(
n\in \mathbb{N}\right) .$ Denote by $f=\left( f_{n},n\in \mathbb{N}\right) $
a martingale with respect to $\digamma _{n}$ $\left( n\in \mathbb{N}\right)
. $ (for details see e.g. Weisz \cite{We1}).

The maximal function of a martingale $f$ is defined by
\begin{equation*}
f^{\ast }=\sup_{n\in \mathbb{N}}\left\vert f^{\left( n\right) }\right\vert .
\end{equation*}

In case $f\in L_{1}\left( G_{m}\right) ,$ the maximal functions are also be
given by
\begin{equation*}
f^{\ast }\left( x\right) =\sup_{n\in \mathbb{N}}\frac{1}{\left\vert
I_{n}\left( x\right) \right\vert }\left\vert \int_{I_{n}\left( x\right)
}f\left( u\right) d\mu \left( u\right) \right\vert
\end{equation*}

For $0<p<\infty $ the Hardy martingale spaces $H_{p}\left( G_{m}\right) $
consist of all martingales, for which
\begin{equation*}
\left\Vert f\right\Vert _{H_{p}}:=\left\Vert f^{\ast }\right\Vert
_{p}<\infty .
\end{equation*}

Let $X=X(G_{m})$ denote either the space $L_{1}(G_{m}),$ or the space of
continuous functions $C(G_{m})$. The corresponding norm is denoted by $\Vert
.\Vert _{X}$. The modulus of continuity, when $X=C\left( G_{m}\right) $ and
the integrated modulus of continuity, where $X=L_{1}\left( G_{m}\right) $
are defined by

\begin{equation*}
\omega \left( \frac{1}{M_{n}},f\right) _{X}=\sup\limits_{h\in I_{n}}\left\Vert
f\left( \cdot +h\right) -f\left( \cdot \right) \right\Vert _{X}.
\end{equation*}

The concept of modulus of continuity in $H_{p}\left( G_{m}\right) \left(
0<p\leq 1\right) $ can be defined in following way
\begin{equation*}
\omega \left(\frac{1}{M_{n}},f\right) _{H_{p}\left( G_{m}\right) }:=\left\Vert
f-S_{M_{n}}f\right\Vert _{H_{p}\left( G_{m}\right) }.
\end{equation*}

If $f\in L_{1}\left( G_{m}\right) ,$ then it is easy to show that the
sequence $\left( S_{M_{n}}f :n\in \mathbb{N}\right) $ is a martingale.

If $f=\left( f_{n},n\in \mathbb{N}\right) $ is a martingale then the
Vilenkin-Fourier coefficients must be defined in a slightly different
manner: $\qquad \qquad $
\begin{equation*}
\widehat{f}\left( i\right) :=\lim_{k\rightarrow \infty
}\int_{G_{m}}f_{k}\left( x\right) \overline{\psi }_{i}\left( x\right) d\mu
\left( x\right) .
\end{equation*}%
\qquad \qquad \qquad

The Vilenkin-Fourier coefficients of $f\in L_{1}\left( G_{m}\right) $ are
the same as the martingale $\left( S_{M_{n}}f :n\in \mathbb{N}\right) $
obtained from $f$ .

A bounded measurable function $a$ is p-atom, if there exist an
interval $I$, such that%
\begin{equation*}
\int_{I}ad\mu =0,\text{ \ \ \ \ }\left\Vert a\right\Vert _{\infty }\leq \mu
\left( I\right) ^{-1/p},\text{ \ \ \ \ \ supp}\left( a\right) \subset I.
\end{equation*}

The martingale Hardy spaces $H_{p}$ $\left( G_{m}\right) $ for $0<p\leq 1$ have
atomic characterizations (for details see e.g. Weisz \cite{We1} and \cite{We3}):

\begin{lemma}
\label{lemma2.1} A martingale $f=\left( f_{n},n\in \mathbb{N}\right) $ is in
$H_{p}\left( 0<p\leq 1\right) $ if and only if there exist a sequence $%
\left( a_{k},k\in \mathbb{N}\right) $ of p-atoms and a sequence $\left( \mu
_{k},k\in \mathbb{N}\right) $ of real numbers such that, for every $n\in
\mathbb{N},$%
\begin{equation}
\qquad \sum_{k=0}^{\infty }\mu _{k}S_{M_{n}}a_{k}=f_{n},\text{ \ \ a.e.,}
\label{condmart}
\end{equation}%
where
\begin{equation*}
\qquad \sum_{k=0}^{\infty }\left\vert \mu _{k}\right\vert ^{p}<\infty .
\end{equation*}%
Moreover,
\begin{equation*}
\left\Vert f\right\Vert _{H_{p}}\backsim \inf \left( \sum_{k=0}^{\infty
}\left\vert \mu _{k}\right\vert ^{p}\right) ^{1/p},
\end{equation*}
where the infimum is taken over all decomposition of $f$ of the form (\ref%
{condmart}).
\end{lemma}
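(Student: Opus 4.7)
The plan is to prove the two implications separately and then combine the resulting estimates to obtain the quasi-norm equivalence. The key observation used throughout is that any $p$-atom $a$ supported on a dyadic interval $I_N(y)$ has its Vilenkin partial sums $S_{M_n}a$ concentrated on $I_N(y)$, which gives sharp control on the maximal function $a^{\ast}$.

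For the sufficiency direction, suppose $f$ admits the representation $f_n = \sum_k \mu_k S_{M_n} a_k$. If $a$ is a $p$-atom supported on $I_N(y)$, then by (\ref{3dn}) one has $S_{M_n}a(x) = M_n \int_{I_n(x)} a \, d\mu$. For $n \leq N$, the interval $I_n(x)$ either contains $I_N(y)$ (so the integral vanishes by the mean-zero condition of $a$) or is disjoint from $I_N(y)$ (so the integral vanishes trivially). For $n > N$, $S_{M_n}a$ is supported on $I_N(y)$ and pointwise bounded by $\|a\|_\infty \leq \mu(I_N(y))^{-1/p}$. Hence $a^{\ast} \leq \mu(I_N(y))^{-1/p} \chi_{I_N(y)}$, so $\|a^{\ast}\|_p \leq 1$. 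The $p$-subadditivity of the $L_p$ quasi-norm (valid for $0 < p \leq 1$) then yields
\[
\|f\|_{H_p}^p = \|f^{\ast}\|_p^p \leq \sum_k |\mu_k|^p \|a_k^{\ast}\|_p^p \leq \sum_k |\mu_k|^p.
\]

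For the necessity direction, given $f \in H_p$, I would use a classical stopping-time argument. Define stopping times $\nu_k := \inf\{n \in \mathbb{N} : |f_n| > 2^k\}$ (with $\inf \emptyset = \infty$) relative to $(\digamma_n)$. The telescoping identity $f_n = \sum_{k \in \mathbb{Z}}(f_n^{\nu_{k+1}} - f_n^{\nu_k})$ holds almost everywhere, and each increment is supported on $\{f^{\ast} > 2^k\}$, which is a disjoint countable union of maximal dyadic intervals $I^{(k)}_j$. One then localizes the $k$-th increment to each $I^{(k)}_j$ and sets $\mu_{k,j} := C \cdot 2^k \mu(I^{(k)}_j)^{1/p}$, so that the normalized blocks $a_{k,j}$ become $p$-atoms. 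A layer-cake computation gives
\[
\sum_{k,j} |\mu_{k,j}|^p \leq C \sum_k 2^{kp} \mu\bigl(\{f^{\ast} > 2^k\}\bigr) \leq C \|f\|_{H_p}^p,
\]
and combined with the sufficiency estimate this yields the asserted quasi-norm equivalence.

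The main obstacle is the necessity step: verifying that the pieces produced by the stopping-time decomposition, once further localized to individual maximal dyadic intervals, genuinely satisfy the three atom conditions (sup-norm bound, mean zero, and proper support), while simultaneously preserving the martingale identity $\sum_k \mu_k S_{M_n} a_k = f_n$ for every $n$. This requires careful use of the $\digamma_{\nu_k}$-measurability of the stopping sets together with the nested dyadic structure of the Vilenkin filtration, and some care with convergence of the doubly-indexed series as $k \to \pm \infty$.
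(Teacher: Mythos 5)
First, a point of comparison: the paper offers no proof of this lemma at all; it is quoted from Weisz \cite{We1}, \cite{We3}, so the benchmark is the standard proof there, which is exactly the route you are following. Your sufficiency half is complete and correct: $S_{M_n}a(x)=M_n\int_{I_n(x)}a\,d\mu$, the nested-or-disjoint structure of Vilenkin intervals kills $S_{M_n}a$ for $n\le N$ via the mean-zero condition, the sup-norm condition gives $a^{\ast}\le\mu(I)^{-1/p}\chi_{I}$ and hence $\Vert a^{\ast}\Vert_{p}\le 1$, and $p$-subadditivity of $t\mapsto t^{p}$ finishes the estimate $\Vert f\Vert_{H_p}^p\le\sum_k\vert\mu_k\vert^p$.

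The necessity half is only a plan, and the obstacle you flag at the end is a genuine gap rather than a routine verification. With $\nu_k:=\inf\{n:\vert f_n\vert>2^k\}$ the stopped martingale $f^{\nu_k}$ is bounded by $2^k$ only strictly before the stopping time; at $n=\nu_k$ the martingale can overshoot $2^k$ by an arbitrarily large factor for a general filtration, so the localized pieces need not satisfy $\Vert a\Vert_\infty\le\mu(I)^{-1/p}$, and your construction of $p$-atoms breaks down exactly there. The ingredient that rescues it, and which your sketch never invokes, is the \emph{regularity} of the bounded Vilenkin filtration: every atom of $\digamma_{n}$ sits inside an atom of $\digamma_{n-1}$ whose measure is at most $\lambda=\sup_n m_n$ times larger, so $E(g\mid\digamma_n)\le\lambda\,E(g\mid\digamma_{n-1})$ for $g\ge0$, which lets one either bound the overshoot by a fixed multiple of $2^k$ or (as in the cited source) run the stopping time on a predictable majorant $\lambda_{n-1}\ge\vert f_n\vert$ and then identify the resulting space with $H_p^{\ast}$. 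This is where the standing hypothesis that $m$ is bounded actually enters the lemma. Two smaller repairs: the correct interval decomposition comes from the level sets $\{\nu_k=n\}\in\digamma_n$ (disjoint unions of rank-$n$ intervals, automatically disjoint over $n$), not from ``maximal dyadic intervals of $\{f^{\ast}>2^k\}$''; and the vanishing of $f_m^{\nu_{k+1}}-f_m^{\nu_k}$ for $m\le n$ on such an interval is what simultaneously delivers the support condition, the mean-zero condition, and the identity $\sum_k\mu_kS_{M_m}a_k=f_m$. With regularity supplied, your layer-cake estimate then closes the argument and the two halves combine to give the stated equivalence.
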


By using atomic decomposition of $ f\in H_p $ martingales, we can bring counterexample, which play a central role to prove
sharpness of our main results and it will be used several times in the paper
(for details see e.g Tephnadze \cite{tepthesis}, Section 1.7., Example 1.48):

\begin{lemma}
\label{example2.6}Let $0<p\leq 1,$ $\lambda =\sup_{n\in \mathbb{N}}m_{n},$ $%
\left\{ \lambda _{k}:k\in \mathbb{N}\right\} $ be a sequence of real
numbers, such that%
\begin{equation}
\sum_{k=0}^{\infty }\left\vert \lambda _{k}\right\vert ^{p}\leq c_{p}<\infty
.  \label{2aa}
\end{equation}%
and $\left\{ a_{k}:k\in \mathbb{N}\right\} $ be a sequence of $p$-atoms,
defined by%
\begin{equation*}
a_{k}:=\frac{M_{\left\vert \alpha _{k}\right\vert }^{1/p-1}}{\lambda }\left(
D_{M_{\left\vert \alpha _{k}\right\vert +1}}-D_{M_{_{\left\vert \alpha
_{k}\right\vert }}}\right) ,
\end{equation*}%
where $\left\vert \alpha _{k}\right\vert :=\max $ $\{j\in \mathbb{N}:$ $%
\left( \alpha _{k}\right) _{j}\neq 0\}$ and $\left( \alpha _{k}\right) _{j}$
denotes the $j$-th binary coefficient of $\alpha _{k}\in \mathbb{N}.$ Then $%
\,f=\left( f_{n}:\text{ }n\in \mathbb{N}\right) ,$ where \qquad
\begin{equation*}
f_{n}:=\sum_{\left\{ k:\text{ }\left\vert \alpha _{k}\right\vert <n\right\}
}\lambda _{k}a_{k}
\end{equation*}
is a martingale, $f\in H_{p}\ $ for all $0<p\leq 1$ and
\begin{eqnarray}   \label{10AA}
&&\widehat{f}(j)=\left\{
\begin{array}{ll}
\frac{\lambda _{k}M_{\left\vert \alpha _{k}\right\vert }^{1/p-1}}{\lambda },\text{\ \ \ } j\in \left\{ M_{\left\vert \alpha _{k}\right\vert },...,\text{ ~}%
M_{\left\vert \alpha _{k}\right\vert +1}-1\right\} ,\text{ }k\in \mathbb{N}, \\
0,  \text{\ \ \ \ \ \ \ \ \ \ \ \ }j\notin \bigcup\limits_{k=1}^{\infty }\left\{
M_{\left\vert \alpha _{k}\right\vert },...,\text{ ~}M_{\left\vert \alpha
_{k}\right\vert +1}-1\right\} .%
\end{array}%
\right.
\end{eqnarray}

Let $M_{\left\vert \alpha _{l}\right\vert }\leq j<M_{\left\vert \alpha
_{l}\right\vert +1},$ $l\in \mathbb{N}.$ Then%
\begin{eqnarray}  \label{11AA}
S_{j}f&=& S_{M_{\left\vert \alpha _{l}\right\vert }}+\frac{\lambda
_{l}M_{\left\vert \alpha _{l}\right\vert }^{1/p-1}\psi _{M_{\left\vert
\alpha _{l}\right\vert }}D_{j-M_{_{\left\vert \alpha _{l}\right\vert }}}}{%
\lambda } \\
&=&\sum_{\eta =0}^{l-1}\frac{\lambda _{\eta }M_{\left\vert \alpha _{\eta
}\right\vert }^{1/p-1}}{\lambda }\left( D_{M_{_{\left\vert \alpha _{\eta
}\right\vert +1}}}-D_{M_{_{\left\vert \alpha _{\eta }\right\vert }}}\right)
\notag \\
&+&\frac{\lambda _{l}M_{\left\vert \alpha _{l}\right\vert }^{1/p-1}\psi
_{M_{\left\vert \alpha _{l}\right\vert }}D_{j-M_{\left\vert \alpha
_{l}\right\vert }}}{\lambda }.  \notag
\end{eqnarray}

Moreover,
\begin{equation}
\omega \left( \frac{1}{M_{n}},f\right)_{H_{p}\left( G_{m}\right) } =O\left( \sum_{\left\{ k:%
\text{ }\left\vert \alpha _{k}\right\vert \geq n\right\} }^{\infty
}\left\vert \lambda _{k}\right\vert ^{p}\right) ^{1/p},\text{\ \ as \ \ }%
n\rightarrow \infty,  \label{2aa0}
\end{equation}
\end{lemma}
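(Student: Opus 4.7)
My plan is to verify the four claims of the lemma in order (atom property, martingale structure, Fourier coefficients, partial sum formula, modulus estimate), leveraging the atomic characterization in Lemma \ref{lemma2.1} and the explicit form of $D_{M_n}$ given in \eqref{3dn}.

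First I would check that each $a_k$ is indeed a $p$-atom. From \eqref{3dn}, the function $D_{M_{|\alpha_k|+1}}-D_{M_{|\alpha_k|}}$ equals $(m_{|\alpha_k|}-1)M_{|\alpha_k|}$ on $I_{|\alpha_k|+1}$, equals $-M_{|\alpha_k|}$ on $I_{|\alpha_k|}\setminus I_{|\alpha_k|+1}$, and vanishes elsewhere. A direct computation shows that its mean is zero, its support lies in $I_{|\alpha_k|}$, and $\|a_k\|_\infty\leq M_{|\alpha_k|}^{1/p}=\mu(I_{|\alpha_k|})^{-1/p}$, using $m_{|\alpha_k|}-1\leq\lambda$. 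This gives the atom property. Next, using again \eqref{3dn} and the fact that $a_k$ is $\digamma_{|\alpha_k|+1}$-measurable with vanishing integral on $I_{|\alpha_k|}$, I would show that $S_{M_n}a_k=a_k$ when $|\alpha_k|<n$ and $S_{M_n}a_k=0$ otherwise, so that $\sum_k\lambda_kS_{M_n}a_k=f_n$ in the sense of \eqref{condmart}. Combined with \eqref{2aa}, Lemma \ref{lemma2.1} then yields $f\in H_p$.

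For the Fourier coefficients \eqref{10AA} I would compute $\widehat{a_k}(j)$ using $\int D_N\overline{\psi}_j\,d\mu=\mathbf{1}_{\{j<N\}}$, which gives $\widehat{a_k}(j)=M_{|\alpha_k|}^{1/p-1}/\lambda$ when $M_{|\alpha_k|}\leq j<M_{|\alpha_k|+1}$ and $0$ elsewhere; the disjointness of the dyadic-type blocks $\{M_{|\alpha_k|},\ldots,M_{|\alpha_k|+1}-1\}$ ensures that the series $\widehat{f}(j)=\sum_k\lambda_k\widehat{a_k}(j)$ picks out a single term. For the partial sum formula \eqref{11AA} with $M_{|\alpha_l|}\leq j<M_{|\alpha_l|+1}$, I split $S_jf=S_{M_{|\alpha_l|}}f+\sum_{i=M_{|\alpha_l|}}^{j-1}\widehat{f}(i)\psi_i$ and pull out the constant coefficient $\lambda_lM_{|\alpha_l|}^{1/p-1}/\lambda$. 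The key identity I need is
\begin{equation*}
\sum_{i=M_{|\alpha_l|}}^{j-1}\psi_i=\psi_{M_{|\alpha_l|}}D_{j-M_{|\alpha_l|}},\qquad M_{|\alpha_l|}\leq j<M_{|\alpha_l|+1},
\end{equation*}
which I would prove by writing $j-M_{|\alpha_l|}=sM_{|\alpha_l|}+r$ with $0\leq s\leq m_{|\alpha_l|}-2$ and $0\leq r<M_{|\alpha_l|}$, using $\psi_{M_{|\alpha_l|}+sM_{|\alpha_l|}+r}=r_{|\alpha_l|}^{s+1}\psi_r=\psi_{M_{|\alpha_l|}}\cdot r_{|\alpha_l|}^{s}\psi_r$, and summing geometrically; this is the main technical step. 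Expanding $S_{M_{|\alpha_l|}}f$ then with the telescoping form $D_{M_{|\alpha_\eta|+1}}-D_{M_{|\alpha_\eta|}}$ yields \eqref{11AA}.

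Finally, for \eqref{2aa0}, since $S_{M_n}a_k=a_k$ for $|\alpha_k|<n$ and $0$ otherwise, the martingale $f-S_{M_n}f$ admits the atomic decomposition $\sum_{\{k:|\alpha_k|\geq n\}}\lambda_ka_k$. Applying Lemma \ref{lemma2.1} directly to this decomposition gives
\begin{equation*}
\omega\!\left(\tfrac{1}{M_n},f\right)_{H_p}=\|f-S_{M_n}f\|_{H_p}\leq c_p\Bigl(\sum_{\{k:|\alpha_k|\geq n\}}|\lambda_k|^p\Bigr)^{1/p},
\end{equation*}
which is the claimed $O$-estimate. The main obstacle I anticipate is the product-kernel identity in the third paragraph; the remaining steps are bookkeeping with the atomic decomposition and the Dirichlet-kernel orthogonality relations.
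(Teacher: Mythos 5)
Your proposal is correct, and all of its steps check out (the mean-zero and $L_\infty$ computations for $a_k$, the identification $S_{M_n}a_k=a_k$ for $|\alpha_k|<n$ and $=0$ otherwise, the block structure of the coefficients, the identity $\sum_{i=M_{|\alpha_l|}}^{j-1}\psi_i=\psi_{M_{|\alpha_l|}}D_{j-M_{|\alpha_l|}}$, and the application of Lemma \ref{lemma2.1} to the tail $\sum_{\{k:|\alpha_k|\geq n\}}\lambda_k a_k$ for \eqref{2aa0}). The paper itself states this lemma without proof, citing the author's thesis (Example 1.48), and your argument is exactly the standard one used there, so there is nothing to add beyond noting that the lemma implicitly requires the exponents $|\alpha_k|$ to be strictly increasing so that the blocks $\{M_{|\alpha_k|},\dots,M_{|\alpha_k|+1}-1\}$ are disjoint, as you correctly observe.
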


There exists an intimate conection between the $ H_p $ and $ L_p $ norms of partial sums (for details see e.g. Tephnadze \cite{tepthesis}, Section 1.7., Example1.45):

\begin{lemma}
\label{example002}Let $M_{k}\leq n<M_{k+1}$ and $S_{n}f$ be the n-th partial
sum with respect to Vilenkin system, where $f\in H_{p}$ \ for some $0<p\leq
1.$ Then for every $n\in \mathbb{N}$ we have the following estimate:
\begin{eqnarray*}
\left\Vert
S_{n}f\right\Vert _{p}\leq\left\Vert S_{n}f\right\Vert _{H_{p}} &\leq &\left\Vert \sup_{0\leq l\leq
k}\left\vert S_{M_{l}}f\right\vert \right\Vert _{p}+\left\Vert
S_{n}f\right\Vert _{p} \\
&\leq &\left\Vert \widetilde{S}_{\#}^{\ast }f\right\Vert _{p}+\left\Vert
S_{n}f\right\Vert _{p}.
\end{eqnarray*}
\end{lemma}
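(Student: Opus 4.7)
The plan is to compute the martingale maximal function $(S_n f)^{*}$ exactly and then reduce the three claimed inequalities to elementary pointwise estimates.

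The key observation is a commutation identity for Vilenkin partial sums: since $S_n$ and $S_{M_j}$ both act as orthogonal projections onto spans of Vilenkin characters (indexed by $\{0,\dots,n-1\}$ and $\{0,\dots,M_j-1\}$, respectively), their composition retains exactly those Vilenkin--Fourier coefficients whose index is smaller than $\min(M_j,n)$. In view of the hypothesis $M_k\le n<M_{k+1}$, this yields
\begin{equation*}
S_{M_j}(S_n f)=\begin{cases} S_{M_j}f, & 0\le j\le k,\\ S_n f, & j\ge k+1.\end{cases}
\end{equation*}
Taking suprema over $j\in\mathbb{N}$ gives the explicit formula
\begin{equation*}
(S_n f)^{*}=\sup_{j\in\mathbb{N}}|S_{M_j}(S_n f)|=\max\Bigl(\sup_{0\le l\le k}|S_{M_l}f|,\;|S_n f|\Bigr).
\end{equation*}

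From this formula all three inequalities drop out. The leftmost one, $\|S_n f\|_p\le\|S_n f\|_{H_p}$, is immediate because choosing any $j\ge k+1$ in the defining supremum shows $|S_n f|\le(S_n f)^{*}$ pointwise. For the middle inequality, I would apply the trivial pointwise bound $\max(X,Y)\le X+Y$ (valid for non-negative $X,Y$) to obtain
\begin{equation*}
(S_n f)^{*}\le\sup_{0\le l\le k}|S_{M_l}f|+|S_n f|,
\end{equation*}
and then pass to $L_p$ quasi-norms. The rightmost inequality is the trivial enlargement $\sup_{0\le l\le k}|S_{M_l}f|\le\sup_{l\in\mathbb{N}}|S_{M_l}f|=\widetilde{S}_{\#}^{*}f$ directly from the definition of the operator $\widetilde{S}_{\#}^{*}$.

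The only substantive point is the commutation identity $S_{M_j}\circ S_n=S_{\min(M_j,n)}$, and that is a direct structural fact about partial sums of orthogonal expansions; no finer properties of the Vilenkin system (such as the Dirichlet kernel identities (\ref{3dn})--(\ref{estdn})) are required. Consequently, the lemma reduces entirely to the identification of $(S_n f)^{*}$ with the pointwise maximum of $\sup_{0\le l\le k}|S_{M_l}f|$ and $|S_n f|$, and I do not foresee any real obstacle beyond this straightforward calculation.
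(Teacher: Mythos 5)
Your strategy is exactly the standard one (and the one used in the source the paper cites for this lemma): the commutation identity $S_{M_j}\circ S_n=S_{\min (M_j,n)}$ is correct even for martingales $f\in H_p$, since $S_nf$ is a Vilenkin polynomial whose coefficients agree with $\widehat{f}(i)$ for $i<n$ and vanish otherwise, and the identification
\begin{equation*}
(S_nf)^{\ast }=\max \Bigl( \sup_{0\leq l\leq k}\left\vert
S_{M_l}f\right\vert ,\ \left\vert S_nf\right\vert \Bigr)
\end{equation*}
together with the inclusion $\sup_{0\leq l\leq k}\left\vert S_{M_l}f\right\vert \leq \widetilde{S}_{\#}^{\ast }f$ gives the first and third inequalities immediately.

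The one step that does not go through as written is ``pass to $L_p$ quasi-norms'' in the middle inequality. For $0<p<1$ the functional $\left\Vert \cdot \right\Vert _p$ is only a quasi-norm: $\left\Vert X+Y\right\Vert _p\leq \left\Vert X\right\Vert _p+\left\Vert Y\right\Vert _p$ fails in general (take $X,Y$ to be indicators of disjoint sets of measure $1/2$ and $p=1/2$). What the pointwise bound $\max (X,Y)\leq X+Y$, or better $\max (X,Y)^p\leq X^p+Y^p$, actually yields is
\begin{equation*}
\left\Vert (S_nf)^{\ast }\right\Vert _p^p\leq \Bigl\Vert \sup_{0\leq l\leq
k}\left\vert S_{M_l}f\right\vert \Bigr\Vert _p^p+\left\Vert S_nf\right\Vert
_p^p,
\end{equation*}
and since $(a^p+b^p)^{1/p}\geq a+b$ for $0<p\leq 1$, this is \emph{weaker} than the displayed inequality with the plain sum of quasi-norms; it does not imply it. This is arguably a defect of the lemma's formulation rather than of your idea: the $p$-th-power form above is all that is used later (in (\ref{11.2}), where each term is separately dominated by $c_p\left\Vert f\right\Vert _{H_p}$), and for $p=1$ your step is fine. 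But as a proof of the literal statement for $0<p<1$ you must either restate the conclusion with $p$-th powers (or an extra constant $c_p$), or supply an additional argument for why these two particular functions satisfy the plain-sum inequality.
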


\qquad

\section{Convergence of subsequences of partial sums on the martingale Hardy spaces}

Our first main result reads:

\begin{theorem}
\label{theorem4.15}a) Let $0<p<1$ and $f\in H_{p}$. Then there exists an
\textit{absolute constant }$c_{p},$ depending only on $p,$ such that%
\begin{equation*}
\text{ }\left\Vert S_{n}f\right\Vert _{H_{p}}\leq \frac{c_{p}M_{\left\vert
n\right\vert }^{1/p-1}}{M_{\left\langle n\right\rangle }^{1/p-1}}\left\Vert
f\right\Vert _{H_{p}}.
\end{equation*}

b) Let $0<p<1\ \ $and $\left\{ n_{k}:\text{ }k\in \mathbb{N}\right\} $ be an increasing sequence of nonnegative integers such that condition (\ref%
{ak}) is satisfied and $\left\{ \Phi _{n}:n\in \mathbb{N}\right\} $ be any
non-decreasing sequence, satisfying the condition
\begin{equation} \label{12e}
\overline{\underset{k\rightarrow \infty }{\lim }}\frac{M_{\left\vert
n_{k}\right\vert }^{1/p-1}}{M_{\left\langle n_{k}\right\rangle }^{1/p-1}\Phi
_{n_{k}}}=\infty .  
\end{equation}%
Then there exists a martingale $f\in H_{p},$ such that
\begin{equation*}
\underset{k\in \mathbb{N}}{\sup }\left\Vert \frac{S_{n_{k}}f}{\Phi _{n_{k}}}%
\right\Vert _{L_{p,\infty }}=\infty .
\end{equation*}
\end{theorem}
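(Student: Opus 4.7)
The plan is to establish (a) by atomic decomposition together with Lemma \ref{example002}, and to establish (b) through an explicit counterexample constructed from Lemma \ref{example2.6} applied to a thinned subsequence of $\{n_k\}$.

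For part (a), using Lemma \ref{lemma2.1} and the $p$-subadditivity of $\Vert\cdot\Vert_{H_p}^p$, I reduce to the atomic estimate
\begin{equation*}
\Vert S_n a\Vert_{H_p}\le c_p\left(\frac{M_{|n|}}{M_{\langle n\rangle}}\right)^{1/p-1}
\end{equation*}
for every $p$-atom $a$ supported on some $I_N$. Since $S_{M_N}a\equiv 0$, the case $n\le M_N$ is trivial; for $n>M_N$ (so $|n|\ge N$), Lemma \ref{example002} combined with (\ref{Sk0}) reduces the problem to the $L_p$-bound $\Vert S_n a\Vert_p\le c_p(M_{|n|}/M_{\langle n\rangle})^{1/p-1}$. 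Partitioning $G_m=I_N\cup\bigcup_{s=0}^{N-1}(I_s\setminus I_{s+1})$ via (\ref{1.1}): on annuli with $s<\langle n\rangle$, formula (\ref{2dn}) forces $D_n(x-t)=0$ for $t\in I_N$, so $S_n a\equiv 0$ there; on $\langle n\rangle\le s<N$, the upper bound (\ref{estdn}) yields $|S_n a(x)|\le cM_sM_N^{1/p-1}$ and the geometric sum $\sum_s\int_{I_s\setminus I_{s+1}}|S_n a|^p$ is of order $(M_N/M_{\langle n\rangle})^{1-p}\le (M_{|n|}/M_{\langle n\rangle})^{1-p}$; on $I_N$, the crude bound $\Vert S_n a\Vert_\infty\le M_N^{1/p}L_n$ together with $\mu(I_N)=1/M_N$ gives $\int_{I_N}|S_n a|^p\le L_n^p$, and Lukomskii's estimate (\ref{var}) shows $L_n^p$ is polynomial in $|n|-\langle n\rangle$ and thus dominated by the exponential factor $(M_{|n|}/M_{\langle n\rangle})^{1-p}$.

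For part (b), by (\ref{12e}) I extract a subsequence $\alpha_l:=n_{k_l}$ along which $M_{|\alpha_l|}^{1/p-1}/(M_{\langle\alpha_l\rangle}^{1/p-1}\Phi_{\alpha_l})\to\infty$, thinning further so that $|\alpha_l|+1<\langle\alpha_{l+1}\rangle$ and this ratio grows super-geometrically. Choosing weights $\{\lambda_l\}\in\ell^p$ that still satisfy $\lambda_l M_{|\alpha_l|}^{1/p-1}/(M_{\langle\alpha_l\rangle}^{1/p-1}\Phi_{\alpha_l})\to\infty$, Lemma \ref{example2.6} produces $f\in H_p$. Formula (\ref{11AA}) splits $S_{\alpha_l}f$ into a sum of previous-term contributions plus the principal term $\lambda_l M_{|\alpha_l|}^{1/p-1}\psi_{M_{|\alpha_l|}}D_{\alpha_l-M_{|\alpha_l|}}/\lambda$. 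On the set $E_l:=I_{\langle\alpha_l\rangle}\setminus I_{\langle\alpha_l\rangle+1}$, which has measure $\sim 1/M_{\langle\alpha_l\rangle}$, the lower bound (\ref{estdn0}) applied to $D_{\alpha_l-M_{|\alpha_l|}}$ (whose lowest nonzero digit remains $\langle\alpha_l\rangle$) yields $|S_{\alpha_l}f(x)|\ge c\lambda_l M_{|\alpha_l|}^{1/p-1}M_{\langle\alpha_l\rangle}$ once the prior contributions are absorbed, whence
\begin{equation*}
\left\Vert\frac{S_{\alpha_l}f}{\Phi_{\alpha_l}}\right\Vert_{L_{p,\infty}}\ge\frac{c\lambda_l M_{|\alpha_l|}^{1/p-1}M_{\langle\alpha_l\rangle}\,\mu(E_l)^{1/p}}{\Phi_{\alpha_l}}\ge\frac{c'\lambda_l M_{|\alpha_l|}^{1/p-1}}{M_{\langle\alpha_l\rangle}^{1/p-1}\Phi_{\alpha_l}}\to\infty.
\end{equation*}

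The principal obstacle is isolating the $l$-th term in (\ref{11AA}) on $E_l$: after the thinning one has $|\alpha_\eta|+1\le\langle\alpha_l\rangle$ for $\eta<l$, so $E_l\subset I_{|\alpha_\eta|+1}$ and the differences $D_{M_{|\alpha_\eta|+1}}-D_{M_{|\alpha_\eta|}}$ equal $(m_{|\alpha_\eta|}-1)M_{|\alpha_\eta|}$ on $E_l$, giving prior-term contributions of order $\sum_{\eta<l}|\lambda_\eta|M_{|\alpha_\eta|}^{1/p}$ which do not automatically vanish. The subsequence $\{\alpha_l\}$ and weights $\{\lambda_l\}$ must therefore be balanced so that this sum remains strictly below $\lambda_l M_{|\alpha_l|}^{1/p-1}M_{\langle\alpha_l\rangle}$ for all large $l$ while keeping $\{\lambda_l\}\in\ell^p$ and preserving the divergence ratio; this careful bookkeeping is the essential technical step.
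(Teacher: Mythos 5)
Your proposal matches the paper's proof in all essentials: part (a) is the same atomic-decomposition argument, with the same three-region split of $G_{m}$ via (\ref{1.1}), the Lebesgue-constant bound (\ref{var}) on $I_{N}$, the kernel estimate (\ref{estdn}) on the annuli $\langle n\rangle\leq s\leq N-1$, vanishing below $\langle n\rangle$, and the reduction from $H_{p}$ to $L_{p}$ through Lemma \ref{example002} and (\ref{Sk0}); part (b) is the same counterexample built from Lemma \ref{example2.6} along a thinned subsequence with $\lambda_{k}$ taken (in effect) as the square root of $M_{\left\langle \alpha _{k}\right\rangle }^{1/p-1}\Phi _{\alpha _{k}}/M_{\left\vert \alpha _{k}\right\vert }^{1/p-1}$, and the lower bound for the principal term coming from (\ref{estdn0}) on $I_{\left\langle \alpha _{k}\right\rangle }\setminus I_{\left\langle \alpha _{k}\right\rangle +1}$. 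The only point of divergence is the ``principal obstacle'' you flag at the end: the paper does not compare the prior-term sum with the main term pointwise on that annulus, but instead bounds $\left\Vert I\right\Vert _{L_{p,\infty }}^{p}$ globally by the convergent series (\ref{12f}) (each summand $\left\Vert M_{\left\vert \alpha _{\eta }\right\vert }^{1/p-1}\left( D_{M_{\left\vert \alpha _{\eta }\right\vert +1}}-D_{M_{\left\vert \alpha _{\eta }\right\vert }}\right) \right\Vert _{L_{p,\infty }}^{p}$ being $O(1)$) and then uses $\left\Vert I+II\right\Vert _{L_{p,\infty }}^{p}\gtrsim \left\Vert II\right\Vert _{L_{p,\infty }}^{p}-\left\Vert I\right\Vert _{L_{p,\infty }}^{p}$, so the delicate bookkeeping you anticipate is not actually needed.
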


\begin{proof}
Suppose that
\begin{equation}
\left\Vert \frac{M_{\left\langle n\right\rangle }^{1/p-1}S_{n}f}{%
M_{\left\vert n\right\vert }^{1/p-1}}\right\Vert _{p}\leq c_{p}\left\Vert
f\right\Vert _{H_{p}}.  \label{11.1}
\end{equation}

According to Lemma \ref{example002} and estimates (\ref{Sk0}) and (\ref{11.1}%
) we get that%
\begin{eqnarray}  \label{11.2}
\left\Vert \frac{M_{\left\langle n\right\rangle }^{1/p-1}S_{n}f}{%
M_{\left\vert n\right\vert }^{1/p-1}}\right\Vert _{H_{p}} \leq \left\Vert
\widetilde{S}_{\#}^{\ast }f\right\Vert _{p}+\left\Vert \frac{M_{\left\langle
n\right\rangle }^{1/p-1}S_{n}f}{M_{\left\vert n\right\vert }^{1/p-1}}%
\right\Vert _{p}\leq c_{p}\left\Vert f\right\Vert _{H_{p}}.
\end{eqnarray}

By using Lemma \ref{lemma2.1} and (\ref{11.2}) the proof of part a) of
Theorem \ref{theorem4.15} will be complete, if we show that%
\begin{equation}
\int\limits_{G}\left\vert \frac{M_{\left\langle n\right\rangle
}^{1/p-1}S_{n}a}{M_{\left\vert n\right\vert }^{1/p-1}}\right\vert d\mu \leq
c_{p}<\infty ,  \label{25a}
\end{equation}%
for every p-atom $a,$ with support$\ I$ and $\mu \left( I\right) =M_{N}^{-1}$%
.

We may assume that this arbitrary p-atom $a$ has support $I=I_{N}.$ It is
easy to see that $S_{n}a =0,$ when $M_{N}$ $\geq n$. Therefore, we can
suppose that $M_{N}<n$. According to $\left\Vert a\right\Vert _{\infty }\leq
M_{N}^{1/p}$  we can write that
\begin{eqnarray} \label{11a1}
\left\vert \frac{M_{\left\langle n\right\rangle }^{1/p-1}S_{n}a\left(
x\right) }{M_{\left\vert n\right\vert }^{1/p-1}}\right\vert &\leq &\frac{%
M_{\left\langle n\right\rangle }^{1/p-1}\left\Vert a\right\Vert _{\infty }}{%
M_{\left\vert n\right\vert }^{1/p-1}}\int_{I_{N}}\left\vert
D_{n}\left(x-t\right) \right\vert d\mu \left( t\right)   \\
&\leq &\frac{M_{\left\langle n\right\rangle }^{1/p-1}M_{N}^{1/p}}{%
M_{\left\vert n\right\vert }^{1/p-1}}\int_{I_{N}}\left\vert D_{n}\left(
x-t\right) \right\vert d\mu \left( t\right) .  \notag
\end{eqnarray}

Let $x\in I_{N}.$ Since $x-t$
$\in I_{N},$ for $t\in I_{N}$ and
\begin{equation*}
v\left( n\right) +v^{\ast }\left( n\right) \leq c\left( \left\vert
n\right\vert -\left\langle n\right\rangle \right) =c\rho \left( n\right)
\end{equation*}
if we apply (\ref{var}) we get that
\begin{eqnarray}
\left\vert \frac{M_{\left\langle n\right\rangle }^{1/p-1}S_{n}a\left(
x\right) }{M_{\left\vert n\right\vert }^{1/p-1}}\right\vert 
&\leq &\frac{M_{\left\langle n\right\rangle }^{1/p-1}M_{N}^{1/p}}{%
M_{\left\vert n\right\vert }^{1/p-1}}\int_{I_{N}}\left\vert D_{n}\left(
t\right) \right\vert d\mu \left( t\right) \\ \notag
&\leq &\frac{%
M_{\left\langle n\right\rangle }^{1/p-1}M_{N}^{1/p}\left( v\left( n\right)
+v^{\ast }\left( n\right) \right) }{M_{\left\vert n\right\vert }^{1/p-1}} \\
&\leq &\frac{cM_{\left\langle n\right\rangle }^{1/p-1}M_{N}^{1/p}\left(
\left\vert n\right\vert -\left\langle n\right\rangle \right) }{M_{\left\vert
n\right\vert }^{1/p-1}}  \notag \\
&\leq & \frac{cM_{N}^{1/p}\rho \left( n\right) }{2^{\rho\left( n\right)
\left( 1/p-1\right) }}  \notag
\end{eqnarray}
and
\begin{equation} \label{11b}
\int_{I_{N}}\left\vert \frac{M_{\left\langle n\right\rangle
}^{1/p-1}S_{n}a\left( x\right) }{M_{\left\vert n\right\vert }^{1/p-1}}%
\right\vert ^{p}d\mu \left( x\right) \leq \frac{\rho ^{p}\left( n\right) }{%
2^{\rho \left( n\right) \left( 1-p\right) }}<c_{p}<\infty .  
\end{equation}

Let $x\in I_{s}\setminus I_{s+1},\,0\leq s\leq N-1<\left\langle
n\right\rangle $ or $0\leq s\leq\left\langle n\right\rangle \leq N-1.$ Then $x-t$
$\in I_{s}\setminus I_{s+1},$ for $t\in I_{N}$. By combining (\ref{3dn})
and (\ref{2dn}) we get that
\begin{equation*}
D_{n}\left( x-t\right) =0
\end{equation*}
and
\begin{equation} \label{11bbaa}
\left\vert \frac{M_{\left\langle n\right\rangle }^{1/p-1}S_{n}a}{%
M_{\left\vert n\right\vert }^{1/p-1}}\right\vert =0.
\end{equation}

Let $x\in I_{s}\setminus I_{s+1},\,0\leq \left\langle
n\right\rangle < s\leq N-1$ or $0\leq \left\langle n\right\rangle <s \leq N-1.$ Then $x-t\in I_{s}\setminus
I_{s+1},$ for $t\in I_{N}$. By applying (\ref{estdn}) we get that%
\begin{eqnarray}  \label{12}
&&\left\vert \frac{M_{\left\langle n\right\rangle }^{1/p-1}S_{n}a\left(
x\right) }{M_{\left\vert n\right\vert }^{1/p-1}}\right\vert \\
&\leq& \frac{c_p M_{\left\langle n\right\rangle }^{1/p-1}M_{N}^{1/p}}{%
M_{\left\vert n\right\vert }^{1/p-1}}\frac{M_{s}}{M_{N}}  \notag \\
&=& c_p M_{\left\langle n\right\rangle }^{1/p-1}M_{s}.  \notag
\end{eqnarray}

By combining (\ref{1.1}), (\ref{11bbaa}) and (\ref{12}) we have that
\begin{eqnarray}
&&\int_{\overline{I_{N}}}\left\vert \frac{M_{\left\langle n\right\rangle
}^{1/p-1}S_{n}a}{M_{\left\vert n\right\vert }^{1/p-1}}\right\vert ^{p}d\mu \\
&=& \overset{N-1}{\underset{s=0}{\sum }}%
\int_{I_{s}\setminus I_{s+1}}\left\vert \frac{M_{\left\langle n\right\rangle
}^{1/p-1}S_{n}a}{M_{\left\vert n\right\vert }^{1/p-1}}\right\vert ^{p}d\mu  \notag \\
&\leq& c_p \overset{N-1}{\underset{s=\left\langle n\right\rangle }{\sum }}%
\int_{I_{s}\setminus I_{s+1}}\left\vert M_{\left\langle n\right\rangle
}^{1/p-1}M_{s}\right\vert ^{p}d\mu  \notag \\
&=& c_p \overset{N-1}{\underset{s=\left\langle n\right\rangle }{\sum }} \frac{c_p {M_{\left\langle n\right\rangle }^{1-p}}}{{M_{s}^{1-p}}}
\leq c_{p}<\infty .  \notag
\end{eqnarray}

Hence, the proof of part a) is complete.

Under condition (\ref{12e}), there exists a sequence $\left\{ \alpha _{k}:%
\text{ }k\in \mathbb{N}\right\} \subset \left\{ n_{k}:\text{ }k\in \mathbb{N}%
\right\} ,$ such that
\begin{equation}
\sum_{\eta =0}^{\infty }\frac{M_{\left\langle n_{\eta }\right\rangle
}^{\left( 1-p\right) /2}\Phi _{n_{\eta }}^{p/2}}{M_{\left\vert \alpha _{\eta
}\right\vert }^{\left( 1-p\right) /2}}<\infty .  \label{12f}
\end{equation}

We note that such increasing sequence $\left\{ \alpha _{k}:k\in \mathbb{N}%
\right\} $ which satisfies condition (\ref{12f}) can be constructed.

Let $f=\left( f_{n},n\in \mathbb{N}\right) $ be a martigale from the Lemma %
\ref{example2.6}, where
\begin{equation}  \label{fna}
\lambda _{k}=\frac{M_{\left\langle \alpha _{k}\right\rangle }^{\left(
1/p-1\right) /2}\Phi _{\alpha _{k}}^{1/2}}{M_{\left\vert \alpha
_{k}\right\vert }^{\left( 1/p-1\right) /2}}.
\end{equation}

In view of (\ref{fna}) we conclude that (\ref{2aa}) is satisfied and by
using Lemma \ref{example2.6} we obtain that\ $f\in H_{p}.$

By now using (\ref{11AA}) with $\lambda _{k}$ defined by (\ref{fna}) we get
that
\begin{eqnarray}  \label{6aaa}
&&\frac{S_{\alpha _{k}}f}{\Phi _{\alpha _{k}}} \\
&=&\frac{1}{\Phi _{\alpha _{k}}}\sum_{\eta =0}^{k-1}M_{\left\vert \alpha
_{\eta }\right\vert }^{\left( 1/p-1\right) /2}M_{\left\langle \alpha _{\eta
}\right\rangle }^{\left( 1/p-1\right) /2}\Phi _{\alpha _{\eta }}^{1/2}\left(
M_{\left\vert \alpha _{\eta }\right\vert +1}-D_{M_{\left\vert \alpha _{\eta
}\right\vert }}\right)  \notag \\
&+&\frac{M_{\left\vert \alpha _{k}\right\vert }^{\left( 1/p-1\right)
/2}M_{\left\langle \alpha _{k}\right\rangle }^{\left( 1/p-1\right)
/2}D_{\alpha _{k}-M_{\left\vert \alpha _{k}\right\vert }}}{\Phi _{\alpha
_{k}}^{1/2}}=I+II.  \notag
\end{eqnarray}

According to (\ref{12f}) we can write that
\begin{eqnarray}  \label{8a}
&&\left\Vert I\right\Vert _{L_{p,\infty }}^{p} \\
&&\leq \frac{1}{\Phi _{\alpha _{k}}^{p}}\sum_{\eta =0}^{\infty }\frac{%
M_{\left\langle \alpha _{\eta }\right\rangle }^{\left( 1-p\right) /2}\Phi
_{\alpha _{\eta }}^{p/2}}{M_{\left\vert \alpha _{\eta }\right\vert }^{\left(
1-p\right) /2}}\left\Vert M_{\left\vert \alpha _{\eta }\right\vert }^{\left(
1/p-1\right) }\left( M_{\left\vert \alpha _{\eta }\right\vert
+1}-D_{M_{\left\vert \alpha _{\eta}\right\vert }}\right) \right\Vert
_{L_{p,\infty }}^{p}  \notag \\
&\leq & \frac{1}{\Phi _{\alpha _{k}}^{p}}\sum_{\eta =0}^{\infty }\frac{%
M_{\left\langle \alpha _{\eta }\right\rangle }^{\left( 1-p\right) /2}\Phi
_{\alpha _{\eta }}^{p/2}}{M_{\left\vert \alpha _{\eta }\right\vert
}^{\left(1-p\right) /2}}<\frac{c}{\Phi _{\alpha _{k}}^{p}}\leq c<\infty .
\notag
\end{eqnarray}

Let $x\in I_{\left\langle \alpha _{k}\right\rangle }\setminus
I_{\left\langle \alpha _{k}\right\rangle +1}.$ If we apply (\ref{estdn0}) we conclude that%
\begin{eqnarray}  \label{12aa}
\left\vert II\right\vert&=&\frac{M_{\left\vert \alpha _{k}\right\vert
}^{\left( 1/p-1\right) /2}M_{\left\langle \alpha _{k}\right\rangle }^{\left(
1/p-1\right) /2}D_{\alpha _{k}-M_{\left\vert \alpha _{k}\right\vert }}}{\Phi
_{\alpha _{k}}^{1/2}} \\
&\geq &\frac{M_{\left\vert \alpha _{k}\right\vert }^{\left( 1/p-1\right)
/2}M_{\left\langle \alpha _{k}\right\rangle }^{\left( 1/p+1\right) /2}}{\Phi
_{\alpha _{k}}^{1/2}}.  \notag
\end{eqnarray}

By combining (\ref{8a}) and (\ref{12aa}) for the sufficiently large $k $, we can write that
\begin{eqnarray*}
&&\left\Vert \frac{S_{\alpha _{k}}f}{\Phi _{\alpha _{k}}}\right\Vert
_{L_{p,\infty }}^{p} \\
&\geq &\left\Vert II\right\Vert _{L_{p,\infty }}^{p}-\left\Vert I\right\Vert
_{L_{p,\infty }}^{p}  \notag \\
&\geq &\frac{1}{2}\left\Vert II\right\Vert _{L_{p,\infty }}^{p}  \notag \\
&\geq &\frac{cM_{\left\vert \alpha _{k}\right\vert }^{\left( 1-p\right)
/2}M_{\left\langle \alpha _{k}\right\rangle }^{\left( 1+p\right) /2}}{\Phi
_{\alpha _{k}}^{p/2}}\mu \left\{ x\in G_{m}:\text{ }\left\vert II\right\vert
\geq \frac{cM_{\left\vert \alpha _{k}\right\vert }^{\left( 1/p-1\right)
/2}M_{\left\langle \alpha _{k}\right\rangle }^{\left( 1/p+1\right) /2}}{%
\Phi_{\alpha _{k}}^{1/2}}\right\}  \notag \\
&\geq &\frac{cM_{\left\vert \alpha _{k}\right\vert }^{\left( 1-p\right)
/2}M_{\left\langle \alpha _{k}\right\rangle }^{\left( 1+p\right) /2}}{\Phi
_{\alpha _{k}}^{p/2}}\mu \left\{ I_{\left\langle \alpha _{k}\right\rangle
}\backslash I_{\left\langle \alpha _{k}\right\rangle +1}\right\}  \notag \\
&\geq & \frac{cM_{\left\vert \alpha _{k}\right\vert }^{\left( 1-p\right) /2}%
}{ M_{\left\langle \alpha _{k}\right\rangle }^{\left( 1-p\right) /2}\Phi
_{\alpha _{k}}^{p/2}}\rightarrow \infty ,\text{ as \ }k\rightarrow \infty .
\notag
\end{eqnarray*}

Thus, also part b) is proved so the proof is complete.
\end{proof}

Next, we present equivalent characterizations of boundedness  of the subsequences of partial sums with respect to the Vilenkin system of $f\in H_{p}$ martingales  in terms of measurable properties of a Dirichlet kernel:

\begin{corollary}
\label{ndn}a) Let $0<p<1$ and $f\in H_{p}$. Then there exists an \textit{%
absolute constant }$c_{p},$ depending only on $p,$ such that%
\begin{equation*}
\text{ }\left\Vert S_{n}f\right\Vert _{H_{p}}\leq c_{p}\left( n\mu \left\{
\text{supp}\left( D_{n}\right) \right\} \right) ^{1/p-1}\left\Vert
f\right\Vert _{H_{p}}.
\end{equation*}

b) Let $0<p<1\ \ $ and $\left\{ n_{k}:\text{ }k\in\mathbb{N}\right\} $ be an increasing sequence of  nonnegative integers such that
\begin{equation}
\sup_{k\in \mathbb{N}}n_{k}\mu \left\{ \text{supp}\left( D_{n_{k}}\right)
\right\} =\infty  \label{nkdnk}
\end{equation}
and $\left\{ \Phi _{n}:n\in\mathbb{N}\right\} $ be any nondecreasing
sequence, satisfying the condition
\begin{equation*}
\overline{\underset{k\rightarrow \infty }{\lim }}\frac{\left( n_{k}\mu
\left\{ \text{supp}\left( D_{n_{k}}\right) \right\} \right) ^{1/p-1}}{\Phi
_{n_{k}}}=\infty .
\end{equation*}%
Then there exists a martingale $f\in H_{p},$ such that
\begin{equation*}
\underset{k\in \mathbb{N}}{\sup }\left\Vert \frac{S_{n_{k}}f}{\Phi _{n_{k}}}%
\right\Vert _{L_{p,\infty }}=\infty .
\end{equation*}
\end{corollary}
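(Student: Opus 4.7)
The plan is to deduce this corollary directly from Theorem \ref{theorem4.15} by establishing that the quantity $n\mu\{\text{supp}(D_n)\}$ is, up to constants depending only on the Vilenkin sequence, comparable to $M_{|n|}/M_{\langle n\rangle}$. Once that is done, the two statements of the corollary are just reformulations of the two statements of the theorem.

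The first step is to prove the two-sided estimate
\[
\frac{1}{2}\cdot\frac{M_{|n|}}{M_{\langle n\rangle}} \;\leq\; n\,\mu\{\text{supp}(D_n)\} \;\leq\; \lambda\cdot\frac{M_{|n|}}{M_{\langle n\rangle}},
\]
where $\lambda:=\sup_{k}m_{k}$. For the upper bound I use $n<M_{|n|+1}\leq\lambda M_{|n|}$ together with the inclusion $\text{supp}(D_n)\subseteq I_{\langle n\rangle}$. This inclusion is immediate from (\ref{2dn}): the inner sum $\sum_{u=m_{j}-n_{j}}^{m_{j}-1}r_{j}^{u}$ is empty for $n_{j}=0$, and for $n_{j}\neq 0$ the factor $D_{M_{j}}$ vanishes off $I_{j}\subseteq I_{\langle n\rangle}$; hence $\mu\{\text{supp}(D_n)\}\leq \mu(I_{\langle n\rangle})=1/M_{\langle n\rangle}$. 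For the lower bound I use $n\geq M_{|n|}$ and apply (\ref{estdn0}), which gives $|D_n(x)|\geq M_{\langle n\rangle}>0$ on the whole of $I_{\langle n\rangle}\setminus I_{\langle n\rangle+1}$, a set of measure $(1-1/m_{\langle n\rangle})/M_{\langle n\rangle}\geq 1/(2M_{\langle n\rangle})$ since $m_{\langle n\rangle}\geq 2$. The degenerate case $|n|=\langle n\rangle$ has to be handled separately, but there both sides are comparable to a constant.

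With this equivalence in hand, raising both sides to the power $1/p-1$ and invoking Theorem \ref{theorem4.15}a) yields
\[
\left\Vert S_{n}f\right\Vert _{H_{p}}\leq c_{p}\frac{M_{|n|}^{1/p-1}}{M_{\langle n\rangle}^{1/p-1}}\left\Vert f\right\Vert _{H_{p}}\leq c_{p}\bigl(n\,\mu\{\text{supp}(D_n)\}\bigr)^{1/p-1}\left\Vert f\right\Vert _{H_{p}},
\]
which is part a). For part b), the equivalence $n_k\mu\{\text{supp}(D_{n_k})\}\sim M_{|n_k|}/M_{\langle n_k\rangle}\sim \lambda^{\rho(n_k)}$ shows that (\ref{nkdnk}) is equivalent to $\sup_{k}\rho(n_k)=\infty$, i.e.\ to condition (\ref{ak}), and the assumption on $\Phi_{n_k}$ translates into condition (\ref{12e}). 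The counterexample martingale constructed in the proof of Theorem \ref{theorem4.15}b) then witnesses the desired unboundedness.

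The only mildly delicate step is the support inclusion derived from (\ref{2dn}); one has to make sure no cancellation in the outer sum enlarges the support beyond $I_{\langle n\rangle}$. But since each individual summand already lives on $I_{\langle n\rangle}$, this is automatic, and no further work is required.
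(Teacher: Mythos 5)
Your proposal is correct and follows essentially the same route as the paper: both establish the two-sided bound $\tfrac{1}{2}M_{\left\vert n\right\vert }/M_{\left\langle n\right\rangle }\leq n\mu \left\{ \text{supp}\left( D_{n}\right) \right\} \leq \lambda M_{\left\vert n\right\vert }/M_{\left\langle n\right\rangle }$ via the inclusions $I_{\left\langle n\right\rangle }\setminus I_{\left\langle n\right\rangle +1}\subset \text{supp}\left( D_{n}\right) \subset I_{\left\langle n\right\rangle }$ and then transfer Theorem \ref{theorem4.15} through the exponent $1/p-1$. Your extra care with the degenerate case $\left\vert n\right\vert =\left\langle n\right\rangle $ and the explicit derivation of the support inclusions from (\ref{2dn}) only makes explicit what the paper leaves implicit.
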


\begin{remark}
\label{remarkndn}Corollary \ref{ndn} shows that when $0<p<1$ the main reason of divergence of partial sums of Vilenkin-Fourier series is unboudedness of Fourier coefficients, but in the case when measure of supp of $n_{k}$-th Dirichlet kernels tends to zero, then the divergence rate drops and in the case when it is maximally small
\begin{equation*}
\mu\left(suppD_{n_{k}}\right)=O\left( \frac{1}{M_{\left\vert n_{k}\right\vert }}\right) ,%
\text{ as \ }\text{ \ \ \ }k\rightarrow \infty,\text{ \ \ \ } \left(
M_{\left\vert n_{k}\right\vert }<n_{k}\leq M_{\left\vert
n_{k}\right\vert+1}\right)
\end{equation*}%
we have convergence.
\end{remark}

\begin{proof}
By combining (\ref{3dn}) and (\ref{2dn}) we can write that%
\begin{equation*}
I_{\left\langle n\right\rangle }\backslash I_{\left\langle n\right\rangle
+1}\subset \text{supp}D_{n}\subset I_{\left\langle n\right\rangle }
\end{equation*}
and
\begin{equation*}
\frac{1}{2M_{\left\langle n\right\rangle }}\leq \mu \left\{ \text{supp}%
D_{n}\right\} \leq \frac{1}{M_{\left\langle n\right\rangle }}
\end{equation*}

Since $M_{\left\vert n\right\vert }\leq n<M_{\left\vert n\right\vert +1}$ we
immediately get that%
\begin{equation*}
\frac{M_{\left\vert n\right\vert }}{2M_{\left\langle n\right\rangle }}\leq
n\mu \left\{ \text{supp}\left( D_{n}\right) \right\} \leq \frac{\lambda
M_{\left\vert n\right\vert }}{M_{\left\langle n\right\rangle }},
\end{equation*}%
where $\lambda =\sup_{n\in \mathbb{N}}m_{n}.$

It follows that
\begin{equation*}
\frac{M_{\left\vert n\right\vert }^{1/p-1}}{2M_{\left\langle n\right\rangle
}^{1/p-1}}\leq \left( n\mu \left\{ \text{supp}\left( D_{n}\right) \right\}
\right) ^{1/p-1}\leq \frac{\lambda ^{1/p-1}M_{\left\vert n\right\vert
}^{1/p-1}}{M_{\left\langle n\right\rangle }^{1/p-1}}.
\end{equation*}

The proof follows by using these estimates in Theorem \ref{theorem4.15}.
\end{proof}

A number of special cases of our result are of particular interest and give both well-known and new information. We just give the following examples of such Corollaries (see Corollaries \ref{corollary4.5.0}-\ref{corollary4.5.2}):

\begin{corollary}
\label{corollary4.5.0}Let $0<p<1$, $f\in H_{p}$ and $\left\{ n_{k}:\text{ }k\in\mathbb{N}\right\} $ be an increasing sequence of  nonnegative integers. Then
\begin{equation*}
\left\Vert S_{n_{k}}f\right\Vert _{H_{p}}\leq c_{p}\left\Vert f\right\Vert
_{H_{p}}
\end{equation*}%
if and only if condition (\ref{ak0}) is satisfied.
\end{corollary}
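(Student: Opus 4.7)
The plan is to deduce this corollary directly from Theorem \ref{theorem4.15} by choosing the sequence $\{\Phi_n\}$ in part b) to be the constant sequence $\Phi_n \equiv 1$, so that the statement $\sup_k \|S_{n_k}f\|_{H_p} \leq c_p \|f\|_{H_p}$ is exactly the kind of bound that Theorem \ref{theorem4.15} characterizes. The key quantitative observation that links the two conditions is that by the definition of the generalized number system,
\begin{equation*}
2^{\rho(n_k)} \leq \frac{M_{|n_k|}}{M_{\langle n_k\rangle}} \leq \lambda^{\rho(n_k)}, \qquad \lambda = \sup_{n\in\mathbb{N}} m_n,
\end{equation*}
so boundedness of $\rho(n_k)$ is equivalent to boundedness of $M_{|n_k|}/M_{\langle n_k\rangle}$, and (since $1/p-1>0$) to boundedness of $M_{|n_k|}^{1/p-1}/M_{\langle n_k\rangle}^{1/p-1}$.

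For the sufficiency direction, I assume $\sup_k \rho(n_k) < \infty$. Then by the estimate above, the factor $M_{|n_k|}^{1/p-1}/M_{\langle n_k\rangle}^{1/p-1}$ appearing in part a) of Theorem \ref{theorem4.15} is bounded by an absolute constant depending only on $p$ and $\lambda$. Applying part a) to each $n_k$ therefore yields $\|S_{n_k}f\|_{H_p} \leq c_p \|f\|_{H_p}$ uniformly in $k$.

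For the necessity direction, I argue by contrapositive: assume $\sup_k \rho(n_k) = \infty$. Then $M_{|n_k|}^{1/p-1}/M_{\langle n_k\rangle}^{1/p-1} \to \infty$ along a subsequence, so taking $\Phi_n \equiv 1$ the hypothesis (\ref{12e}) of Theorem \ref{theorem4.15} b) is satisfied. Consequently there exists $f\in H_p$ with $\sup_k \|S_{n_k}f\|_{L_{p,\infty}} = \infty$. Using the elementary embedding $\|\cdot\|_{L_{p,\infty}} \leq \|\cdot\|_p$ together with Lemma \ref{example002}, we have $\|S_{n_k}f\|_{L_{p,\infty}} \leq \|S_{n_k}f\|_{H_p}$, so $\sup_k \|S_{n_k}f\|_{H_p} = \infty$, contradicting the assumed uniform bound.

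The argument is essentially a translation between the arithmetic condition (\ref{ak0}) and the analytic ratio appearing in Theorem \ref{theorem4.15}; no real obstacle arises once the two-sided estimate for $M_{|n_k|}/M_{\langle n_k\rangle}$ in terms of $\rho(n_k)$ is in hand, and the only mildly subtle point is remembering to invoke Lemma \ref{example002} (rather than a naive $L_p\subset L_{p,\infty}$ inequality) to pass from the weak-$L_p$ divergence produced by part b) of the theorem to $H_p$-divergence.
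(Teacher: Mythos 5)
Your proof is correct and follows essentially the same route as the paper: the same two-sided estimate $2^{\rho(n_k)}\leq M_{|n_k|}/M_{\langle n_k\rangle}\leq\lambda^{\rho(n_k)}$ reducing condition (\ref{ak0}) to boundedness of the ratio in Theorem \ref{theorem4.15}, with part a) giving sufficiency and part b) (with $\Phi\equiv 1$) giving necessity. The paper's proof is just terser; your extra care in passing from the weak-$L_p$ blow-up to $H_p$-divergence via Lemma \ref{example002} fills in a step the paper leaves implicit.
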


\begin{proof}
It is easy to show that
\begin{equation*}
2^{\rho \left( n_{k}\right) }\leq \frac{M_{\left\vert n_{k}\right\vert }}{%
M_{\left\langle n_{k}\right\rangle }}\leq \lambda ^{\rho \left( n_{k}\right)
},
\end{equation*}%
where $\lambda =\sup_{n\in \mathbb{N}}m_{n}.$ It follows that%
\begin{equation*}
\sup_{k\in \mathbb{N}}\frac{M_{\left\vert n_{k}\right\vert }^{1/p-1}}{%
M_{\left\langle n_{k}\right\rangle }^{1/p-1}}<c<\infty
\end{equation*}%
if and only if (\ref{ak0}) holds, so the proof follows by using Theorem \ref%
{theorem4.15}.
\end{proof}

\begin{corollary}
\label{corollary4.5.1}Let $n\in \mathbb{N}$ and $0<p<1.$ Then there exists a
martingale $f\in H_{p},$ such that
\begin{equation}
\underset{n\in \mathbb{N}}{\sup }\left\Vert S_{M_{n}+1}f\right\Vert
_{L_{p,\infty }}=\infty .  \label{sn2n1}
\end{equation}
\end{corollary}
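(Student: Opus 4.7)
The plan is to deduce this statement directly from part b) of Theorem \ref{theorem4.15} by choosing the sequence $\{n_k\}$ to be $n_k := M_k + 1$ and the normalizing sequence $\Phi_n$ to be identically $1$. The main task is to verify the two hypotheses of Theorem \ref{theorem4.15} b), namely the divergence of $\rho(n_k)$ (condition (\ref{ak})) and the divergence in condition (\ref{12e}).

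First I would record the digit expansion of $n_k=M_k+1$. Since $M_0=1$, we have $M_k+1=1\cdot M_0+1\cdot M_k$, so the only non-zero Vilenkin digits of $n_k$ are at indices $0$ and $k$. Hence
\begin{equation*}
\langle n_k\rangle =0,\qquad |n_k|=k,\qquad \rho(n_k)=k,
\end{equation*}
which shows that $\sup_k \rho(n_k)=\infty$, so condition (\ref{ak}) holds. Moreover $M_{\langle n_k\rangle}=M_0=1$ and $M_{|n_k|}=M_k$.

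Next I would take $\Phi_n\equiv 1$, which is trivially non-decreasing. With this choice, the quantity appearing in (\ref{12e}) is
\begin{equation*}
\frac{M_{|n_k|}^{1/p-1}}{M_{\langle n_k\rangle}^{1/p-1}\Phi_{n_k}}
=M_k^{1/p-1}.
\end{equation*}
Since $m_j\geq 2$ for every $j$, we have $M_k\to\infty$, and because $0<p<1$ we have $1/p-1>0$; therefore $M_k^{1/p-1}\to\infty$, so condition (\ref{12e}) is fulfilled.

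Having verified both hypotheses, Theorem \ref{theorem4.15} b) yields a martingale $f\in H_p$ such that
\begin{equation*}
\sup_{k\in\mathbb{N}}\left\Vert\frac{S_{M_k+1}f}{\Phi_{M_k+1}}\right\Vert_{L_{p,\infty}}
=\sup_{k\in\mathbb{N}}\left\Vert S_{M_k+1}f\right\Vert_{L_{p,\infty}}=\infty,
\end{equation*}
which is exactly (\ref{sn2n1}). There is essentially no obstacle: the only subtlety is the correct identification of $\langle n_k\rangle$ and $|n_k|$ for $n_k=M_k+1$, after which the result is immediate from the already established Theorem \ref{theorem4.15} b).
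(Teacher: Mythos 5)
Your proof is correct and follows essentially the same route as the paper: both rest on computing $\langle M_n+1\rangle=0$, $\left\vert M_n+1\right\vert=n$, hence $\rho(M_n+1)=n\to\infty$, and then invoking the general divergence result. The only cosmetic difference is that you apply Theorem \ref{theorem4.15} b) directly with $\Phi\equiv 1$ (which is arguably the cleaner citation), whereas the paper routes the argument through Corollary \ref{corollary4.5.0}, whose negative direction is exactly that special case.
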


\begin{proof}
To prove Corollary \ref{corollary4.5.1} we only have to calculate that%
\begin{equation}  \label{1.11}
\left\vert M_{n}+1\right\vert =n,\text{ }\left\langle M_{n}+1\right\rangle =0
\end{equation}%
and
\begin{equation}  \label{1.111}
\rho \left( M_{n}+1\right) =n.\text{ }
\end{equation}%

By using Corollary \ref{corollary4.5.0} we obtain that there exists an
martingale $f\in H_{p}$ $\left( 0<p<1\right) ,$ such that (\ref{sn2n1})
holds.

The proof is complete.
\end{proof}

\begin{corollary}
\label{corollary4.5.2}Let $n\in \mathbb{N}$, $0<p\leq 1$ and $f\in H_{p}$.
Then
\begin{equation}
\left\Vert S_{M_{n}+M_{n-1}}f\right\Vert _{H_{p}}\leq c_{p}\left\Vert
f\right\Vert _{H_{p}}.  \label{sn2n2}
\end{equation}
\end{corollary}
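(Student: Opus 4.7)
The plan is to deduce this corollary from Corollary \ref{corollary4.5.0} for $0<p<1$ and to handle the endpoint $p=1$ separately. The key preliminary observation is to identify the digits of $M_n+M_{n-1}$ in the generalized number system based on $m$: this integer has exactly two nonzero digits, a $1$ at position $n-1$ and a $1$ at position $n$, while every other digit vanishes. Consequently
\[
\left\vert M_n+M_{n-1}\right\vert = n, \qquad \left\langle M_n+M_{n-1}\right\rangle = n-1, \qquad \rho(M_n+M_{n-1}) = 1.
\]

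For $0<p<1$ the conclusion is immediate: applying Corollary \ref{corollary4.5.0} to the sequence $n_k:=M_k+M_{k-1}$ and noting that $\sup_k\rho(n_k)=1<\infty$ shows that condition (\ref{ak0}) is satisfied, so $\left\Vert S_{M_n+M_{n-1}}f\right\Vert _{H_p}\leq c_p\left\Vert f\right\Vert _{H_p}$ with a constant depending only on $p$.

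For $p=1$, I would use the character identity
\[
S_{M_n+M_{n-1}}f \;=\; S_{M_n}f \;+\; \psi_{M_n}\cdot S_{M_{n-1}}\bigl(\overline{\psi_{M_n}}\,f\bigr),
\]
obtained by splitting the partial sum at $k=M_n$, invoking the product rule $\psi_{M_n+j}=\psi_{M_n}\psi_j$ for $0\leq j<M_n$, and using that $\psi_{M_n}$ is a group character on $G_m$ (so that the shift in the convolution can be absorbed). Lemma \ref{example002} then reduces the task to bounding the $L_1$-norms of the two summands on the right. The first is controlled by $\left\Vert \widetilde S_\#^* f\right\Vert _1\leq\left\Vert f\right\Vert _{H_1}$ by (\ref{Sk0}); for the second, since $|\psi_{M_n}|=1$, another application of (\ref{Sk0}) gives $\left\Vert S_{M_{n-1}}(\overline{\psi_{M_n}}f)\right\Vert _1\leq\left\Vert \overline{\psi_{M_n}}f\right\Vert _{H_1}$.

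The main obstacle is this last reduction: one needs the uniform bound $\left\Vert \overline{\psi_{M_n}}f\right\Vert _{H_1}\leq c\left\Vert f\right\Vert _{H_1}$. For $k\geq n+1$ the character $\overline{\psi_{M_n}}$ is $\digamma_k$-measurable, so $E_k(\overline{\psi_{M_n}}f)=\overline{\psi_{M_n}}E_kf$ and $|E_k(\overline{\psi_{M_n}}f)|=|E_kf|$, which is harmless. For $k\leq n$ one has to rewrite $E_k(\overline{\psi_{M_n}}f)=E_k(\overline{\psi_{M_n}}E_{n+1}f)$ and dominate it by a Vilenkin maximal function of $E_{n+1}f$ which is in turn controlled by $f^*$. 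This verification is the one technical, non-routine step, after which the $p=1$ case is complete.
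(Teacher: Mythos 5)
Your treatment of the case $0<p<1$ is exactly the paper's proof: compute $\left\vert M_{n}+M_{n-1}\right\vert =n$, $\left\langle M_{n}+M_{n-1}\right\rangle =n-1$, $\rho \left( M_{n}+M_{n-1}\right) =1$, and apply Corollary \ref{corollary4.5.0}. (The paper in fact invokes Corollary \ref{corollary4.5.0} for all $0<p\leq 1$ even though it is stated for $p<1$; the intended justification at the endpoint is the $(H_{p},L_{p})$-boundedness, $0<p\leq 1$, of the restricted maximal operator $\widetilde{S}^{\ast ,\vartriangle }$ under condition (\ref{ak0}), quoted from \cite{tepthesis} in the introduction, combined with Lemma \ref{example002}.)

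Your separate argument for $p=1$ has a genuine gap: the step you flag as ``technical, non-routine'' is in fact false. The splitting identity $S_{M_{n}+M_{n-1}}f=S_{M_{n}}f+\psi _{M_{n}}S_{M_{n-1}}\left( \overline{\psi _{M_{n}}}f\right) $ is correct, but the uniform bound $\left\Vert \overline{\psi _{M_{n}}}f\right\Vert _{H_{1}}\leq c\left\Vert f\right\Vert _{H_{1}}$ fails. Take $f=\psi _{M_{n}}D_{M_{n}}$, whose spectrum lies in $\{M_{n},\dots ,2M_{n}-1\}$; then (writing $E_{k}=S_{M_{k}}$ for the conditional expectation) $E_{k}f=0$ for $k\leq n$ and $E_{k}f=f$ for $k\geq n+1$, so $f^{\ast }=\left\vert f\right\vert =D_{M_{n}}$ and $\left\Vert f\right\Vert _{H_{1}}=1$. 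But $\overline{\psi _{M_{n}}}f=D_{M_{n}}$ and $E_{k}D_{M_{n}}=D_{M_{\min (k,n)}}$, so $\sup_{k}E_{k}D_{M_{n}}=M_{s}$ on $I_{s}\setminus I_{s+1}$ for $s<n$, whence $\left\Vert D_{M_{n}}\right\Vert _{H_{1}}=\sum_{s=0}^{n-1}\left( 1-1/m_{s}\right) +1\geq n/2$, unbounded in $n$. This is exactly the failure of the $L_{1}$ Doob maximal inequality that your sketch would need to circumvent: $\left\vert E_{k}\left( \overline{\psi _{M_{n}}}E_{n+1}f\right) \right\vert \leq E_{k}\left\vert E_{n+1}f\right\vert $ only produces the Doob maximal function of $\left\vert E_{n+1}f\right\vert $, which is not $L_{1}$-bounded by $\left\Vert f^{\ast }\right\Vert _{1}$. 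Note also that $\left\Vert S_{M_{n-1}}\left( \overline{\psi _{M_{n}}}f\right) \right\Vert _{1}=\left\Vert S_{M_{n}+M_{n-1}}f-S_{M_{n}}f\right\Vert _{1}$, so routing the estimate through $\left\Vert \overline{\psi _{M_{n}}}f\right\Vert _{H_{1}}$ does not actually reduce the difficulty; the endpoint case should instead be obtained from the known boundedness of $\widetilde{S}^{\ast ,\vartriangle }$ under (\ref{ak0}) together with Lemma \ref{example002}.
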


\begin{proof}
Analogously to (\ref{1.11}) and (\ref{1.111}) we can write that%
\begin{equation*}
\left\vert M_{n}+M_{n-1}\right\vert =n,\text{ \ \ }\left\langle
M_{n}+M_{n-1}\right\rangle =n-1
\end{equation*}
and
\begin{equation*}
\rho \left( M_{n}+M_{n-1}\right)=1.
\end{equation*}

By using Corollary \ref{corollary4.5.0} we immediately get inequality (\ref%
{sn2n2}), for all $0<p\leq 1$.

The proof is complete.
\end{proof}

\begin{corollary}
\label{corollary4.5.222}Let $n\in \mathbb{N}$, $0<p\leq 1$ and $f\in H_{p}$.
Then
\begin{equation}
\left\Vert S_{M_{n}}f\right\Vert _{H_{p}}\leq c_{p}\left\Vert
f\right\Vert _{H_{p}}.  \label{sn2n2}
\end{equation}
\end{corollary}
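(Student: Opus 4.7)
The plan is to reduce the claim to results already established earlier in the paper, rather than redoing the atomic decomposition calculation. The key observation is that $M_n$ has only one nonzero ``Vilenkin digit'', namely $(M_n)_n=1$ with all other digits equal to zero, so that
$$\left\vert M_n\right\vert =n,\qquad \left\langle M_n\right\rangle =n,\qquad \rho(M_n)=0.$$
In particular, for the sequence $n_k:=M_k$, condition (\ref{ak0}) holds trivially with supremum equal to $0$.

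For the range $0<p<1$, I would simply invoke Corollary \ref{corollary4.5.0} applied to this sequence: since $\sup_k\rho(M_k)=0<\infty$, the corollary immediately gives $\|S_{M_n}f\|_{H_p}\leq c_p\|f\|_{H_p}$.

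For the endpoint $p=1$ (which is not directly covered by Corollary \ref{corollary4.5.0}, since Theorem \ref{theorem4.15} restricts to $0<p<1$), I would fall back on the general tools already assembled. By (\ref{Sk0}) we have
$$\left\Vert \widetilde{S}_{\#}^{\ast}f\right\Vert _{p}\leq \left\Vert f\right\Vert _{H_p}$$
for every $0<p\leq 1$. Since $|S_{M_n}f|\leq \widetilde{S}_{\#}^{\ast}f$ pointwise, this already yields $\|S_{M_n}f\|_p\leq \|f\|_{H_p}$. Now applying Lemma \ref{example002} with $n=M_n$ (so that $k=n$ in the notation of that lemma) gives
$$\left\Vert S_{M_n}f\right\Vert _{H_p}\leq \left\Vert \widetilde{S}_{\#}^{\ast}f\right\Vert _{p}+\left\Vert S_{M_n}f\right\Vert _{p}\leq 2\left\Vert f\right\Vert _{H_p}.$$

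There is no real obstacle here: the only subtle point is noticing that $p=1$ is not covered by the reference to Corollary \ref{corollary4.5.0} and must be handled separately via (\ref{Sk0}) and Lemma \ref{example002}. Both routes rely on results already in hand, so the proof is essentially a one-line verification in each case.
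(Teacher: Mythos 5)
Your proof is correct and, for the range $0<p<1$, it is exactly the paper's own argument: compute $\left\vert M_{n}\right\vert =\left\langle M_{n}\right\rangle =n$, hence $\rho\left(M_{n}\right)=0$, and invoke Corollary \ref{corollary4.5.0}. Where you differ is at the endpoint $p=1$, and your extra care there is actually warranted: the paper's proof simply cites Corollary \ref{corollary4.5.0} ``for all $0<p\leq 1$,'' even though that corollary (and Theorem \ref{theorem4.15} behind it) is stated only for $0<p<1$, so strictly speaking the paper leaves the case $p=1$ unjustified. Your fallback --- the pointwise bound $\left\vert S_{M_{n}}f\right\vert \leq \widetilde{S}_{\#}^{\ast }f$ combined with (\ref{Sk0}) to get $\left\Vert S_{M_{n}}f\right\Vert _{p}\leq \left\Vert f\right\Vert _{H_{p}}$, followed by Lemma \ref{example002} to upgrade this to the $H_{p}$ quasi-norm --- closes that small gap cleanly, and in fact that second route alone proves the corollary for the entire range $0<p\leq 1$ without any reference to Theorem \ref{theorem4.15}, so you could have dispensed with the case split altogether.
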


\begin{proof}
Analogously to (\ref{1.11}) and (\ref{1.111}) we can write that%
\begin{equation*}
\left\vert M_{n}\right\vert =n,\text{ \ \ }\left\langle
M_{n}\right\rangle =n \text{ \ and \ } \rho \left( M_{n}\right)=0.
\end{equation*}

By using Corollary \ref{corollary4.5.0} we immediately get inequality (\ref%
{sn2n2}), for all $0<p\leq 1$.

The proof is complete.
\end{proof}

\section{necessary and sufficient condition for the
convergence of partial sums in terms of modulus of continuity}

The main result of this section reads:

\begin{theorem}
\label{theorem4.7a}Let $0<p<1,$ $f\in H_{p}\ \ $and $M_{k}<n\leq M_{k+1}.$
Then there exists an absolute constant $c_{p}$, depending only on $p,$ such
that
\begin{equation} \label{mag0a}
\left\Vert S_{n}f-f\right\Vert _{H_{p}}\leq \frac{c_{p}M_{\left\vert
n\right\vert }^{1/p-1}}{M_{\left\langle n\right\rangle }^{1/p-1}}\omega
\left( \frac{1}{M_{k}},f\right)_{H_{p}(G_m)} ,\ \ \ \left( 0<p<1\right) .
\end{equation}

Moreover, if $\{n_{k}:k\in%
\mathbb{N}\}$\textit{\ be increasing sequence of nonnegative integers such
that}
\begin{equation*}
\omega \left( \frac{1}{M_{\left\vert n_{k}\right\vert }},f\right)_{H_{p}(G_m)}
=o\left( \frac{M_{\left\langle n_{k}\right\rangle }^{1/p-1}}{M_{\left\vert
n_{k}\right\vert }^{1/p-1}}\right) ,\text{ as \ }k\rightarrow \infty,
\end{equation*}
then
\begin{equation}
\left\Vert S_{n_{k}}f-f\right\Vert _{H_{p}}\rightarrow 0,\,\,\,\text{as}%
\,\,\,k\rightarrow \infty.  \label{con1pa}
\end{equation}

b) Let $\{n_{k}:k\in\mathbb{N}\}$ be an increasing sequence of nonnegative
integers such that condition (\ref{ak}) is satisfied. Then
there exists a martingale $f\in H_{p}$ and a subsequence $\{\alpha _{k}:k\in%
\mathbb{N}\}\subset \{n_{k}:k\in\mathbb{N}\},$ for which
\begin{equation}  \label{mag}
\omega \left( \frac{1}{M_{\left\vert \alpha _{k}\right\vert }}%
,f\right)_{H_{p}(G_m)} =O\left( \frac{M_{\left\langle \alpha _{k}\right\rangle }^{1/p-1}}{%
M_{\left\vert \alpha _{k}\right\vert }^{1/p-1}}\right),\text{ as \ }%
k\rightarrow \infty
\end{equation}
\textit{and}
\begin{equation}
\limsup\limits_{k\rightarrow \infty }\left\Vert S_{\alpha _{k}}
f-f\right\Vert _{L_{p,\infty }}>c>0,\,\,\,\text{as\thinspace \thinspace
\thinspace }k\rightarrow \infty .  \label{cond9}
\end{equation}
\end{theorem}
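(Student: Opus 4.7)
\textbf{Proof plan for Theorem \ref{theorem4.7a}.}

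\emph{Part (a).} For $M_k < n \leq M_{k+1}$ the operator $S_n$ fixes every polynomial with Vilenkin index below $M_k$, so $S_n(S_{M_k}f) = S_{M_k}f$, and therefore
\begin{equation*}
S_n f - f = S_n(f - S_{M_k}f) - (f - S_{M_k}f).
\end{equation*}
Using $p$-subadditivity of $\|\cdot\|_{H_p}^p$ (valid for $0<p<1$) and applying Theorem \ref{theorem4.15}(a) to the martingale $f - S_{M_k}f \in H_p$ produces
\begin{equation*}
\|S_n f - f\|_{H_p}^p \leq \Bigl(c_p^p\,\tfrac{M_{|n|}^{1-p}}{M_{\langle n\rangle}^{1-p}} + 1\Bigr)\|f - S_{M_k}f\|_{H_p}^p.
\end{equation*}
Because $M_{|n|}/M_{\langle n\rangle} \geq 1$ and by definition $\|f - S_{M_k}f\|_{H_p} = \omega(1/M_k, f)_{H_p(G_m)}$, taking $p$-th roots gives (\ref{mag0a}). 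The convergence (\ref{con1pa}) is then immediate upon specializing to $n = n_k$ with $k = |n_k|$ and invoking the hypothesis on the decay of the modulus.

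\emph{Part (b).} Using (\ref{ak}), I thin $\{n_k\}$ to a subsequence $\{\alpha_k\}$ along which $\rho(\alpha_k)\to\infty$ so rapidly that the scalars
\begin{equation*}
|\lambda_k|^p := \frac{M_{\langle \alpha_k\rangle}^{1-p}}{M_{|\alpha_k|}^{1-p}} \leq 2^{-\rho(\alpha_k)(1-p)}
\end{equation*}
form a geometrically decreasing sequence. Consequently $\sum_k|\lambda_k|^p<\infty$, which by Lemma \ref{example2.6} places the associated martingale $f$ in $H_p$, and the geometric tail bound $\sum_{j\geq k}|\lambda_j|^p \leq c|\lambda_k|^p$ combined with (\ref{2aa0}) yields exactly the modulus estimate (\ref{mag}).

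To verify (\ref{cond9}), I start from (\ref{11AA}) together with the full martingale expansion of $f$; after cancellation of the terms with index $\eta<k$ I obtain
\begin{equation*}
f - S_{\alpha_k}f = \frac{\lambda_k M_{|\alpha_k|}^{1/p-1}}{\lambda}\bigl[(D_{M_{|\alpha_k|+1}}-D_{M_{|\alpha_k|}}) - \psi_{M_{|\alpha_k|}} D_{\alpha_k - M_{|\alpha_k|}}\bigr] + \sum_{\eta>k}\frac{\lambda_\eta M_{|\alpha_\eta|}^{1/p-1}}{\lambda}(D_{M_{|\alpha_\eta|+1}}-D_{M_{|\alpha_\eta|}}).
\end{equation*}
On the set $E_k := I_{\langle\alpha_k\rangle}\setminus I_{\langle\alpha_k\rangle+1}$, every Dirichlet kernel $D_{M_j}$ with $j \geq |\alpha_k| > \langle\alpha_k\rangle$ vanishes by (\ref{3dn}); thus the entire $\eta>k$ sum is identically zero on $E_k$ and the bracket collapses to $-\psi_{M_{|\alpha_k|}}D_{\alpha_k - M_{|\alpha_k|}}$. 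By (\ref{estdn0}), applicable because $|\alpha_k|\neq\langle\alpha_k\rangle$, one has $|D_{\alpha_k - M_{|\alpha_k|}}|\geq M_{\langle\alpha_k\rangle}$ on $E_k$. Since $\mu(E_k)\asymp M_{\langle\alpha_k\rangle}^{-1}$, the definition of the weak quasi-norm gives
\begin{equation*}
\|f-S_{\alpha_k}f\|_{L_{p,\infty}}^p \geq c\,|\lambda_k|^p\, M_{|\alpha_k|}^{1-p}\,M_{\langle\alpha_k\rangle}^{p-1} \geq c > 0,
\end{equation*}
by the very choice of $|\lambda_k|^p$, establishing (\ref{cond9}).

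The principal obstacle is the tuning of $\{\lambda_k\}$ in part (b): a single choice must simultaneously place $f$ in $H_p$, produce a modulus of continuity of the exact order $M_{\langle\alpha_k\rangle}^{1/p-1}/M_{|\alpha_k|}^{1/p-1}$ (i.e., the tail $\sum_{j\geq k}|\lambda_j|^p$ must be comparable to the single term $|\lambda_k|^p$, not larger), and leave the corresponding $k$-th block of $f - S_{\alpha_k}f$ large enough to violate convergence in $L_{p,\infty}$. Condition (\ref{ak}) is precisely what allows the thinning $\{n_k\}\to\{\alpha_k\}$ to be rapid enough that all three requirements are compatible.
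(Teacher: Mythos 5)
Your argument follows the paper's essentially verbatim: part (a) is the same reduction to Theorem~\ref{theorem4.15}(a) via $S_nf-S_{M_k}f=S_n\left(f-S_{M_k}f\right)$ together with $p$-subadditivity, and part (b) builds the same counterexample from Lemma~\ref{example2.6} with the same weights $\left\vert \lambda_k\right\vert^p=M_{\langle\alpha_k\rangle}^{1-p}/M_{\left\vert\alpha_k\right\vert}^{1-p}$ along a subsequence thinned so that the tail is dominated by its leading term, then bounds the critical Dirichlet-kernel block from below on $I_{\langle\alpha_k\rangle}\setminus I_{\langle\alpha_k\rangle+1}$ using (\ref{estdn0}). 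The only (harmless) difference is organizational: you localize pointwise to that set, where every other term of $f-S_{\alpha_k}f$ vanishes by (\ref{3dn}), whereas the paper reaches the same conclusion by the quasi-norm subtraction $\Vert II\Vert_{L_{p,\infty}}-\Vert S_{M_{\left\vert\alpha_k\right\vert}}f-f\Vert_{L_{p,\infty}}$.
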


\begin{proof}
Let $0<p<1.$ Then, by using Theorem \ref{theorem4.15}, we immediately get
that%
\begin{eqnarray*}
&&\left\Vert S_{n}f-f\right\Vert _{H_{p}}^{p} \\
& \leq & \left\Vert S_{n}f-S_{M_{k}}f\right\Vert _{H_{p}}^{p}+\left\Vert
S_{M_{k}}f-f\right\Vert _{H_{p}}^{p}  \notag \\
&=&\left\Vert S_{n}\left( S_{M_{k}}f-f\right) \right\Vert
_{H_{p}}^{p}+\left\Vert S_{M_{k}}f-f\right\Vert _{H_{p}}^{p}  \notag \\
&\leq & \left(\frac{c_{p}M_{\left\vert n\right\vert }^{1/p-1}}{%
M_{\left\langle n\right\rangle }^{1/p-1}}+1\right) \omega _{H_{p}}^{p}\left(
\frac{1}{M_{k}},f\right).  \notag
\end{eqnarray*}%
and%
\begin{equation} \label{4.18sss}
\left\Vert S_{n}f-f\right\Vert _{H_{p}}\leq \frac{c_{p}M_{\left\vert
n\right\vert }^{1/p-1}}{M_{\left\langle n\right\rangle }^{1/p-1}}\omega
\left( \frac{1}{M_{k}},f\right)_{H_{p}(G_m)} .
\end{equation}

According to condition (\ref{mag0a}) if we  apply estimate (\ref{4.18sss}) we immediately get that (\ref{con1pa})
holds.

For the proof of part b) we first note that under the conditions of part b)
of Theorem \ref{theorem4.7a}, there exists $\{{\alpha _{k}:k\in \mathbb{N}}%
\}\subset\{{n_{k}:k\in \mathbb{N}}\}$, such that $\ $
\begin{eqnarray}  \label{4.18}
\ \frac{M_{\left\vert \alpha _{k}\right\vert }}{M_{\left\langle \alpha
_{k}\right\rangle }}\uparrow \infty ,\,\,\,\,\text{as}\ \ k\rightarrow \infty
\\
\frac{M_{\left\vert \alpha _{k}\right\vert }^{2\left( 1/p-1\right) }}{%
M_{\left\langle \alpha _{k}\right\rangle }^{2\left( 1/p-1\right) }}\leq
\frac{M_{\left\vert \alpha _{k+1}\right\vert }^{1/p-1}}{M_{\left\langle
\alpha _{k+1}\right\rangle }^{1/p-1}}.  \label{4.18s}
\end{eqnarray}

Let $f=\left( f_{n},n\in \mathbb{N}\right) $ be a martigale from the Lemma %
\ref{example2.6}, where
\begin{equation}  \label{4.18ss}
\lambda_{k}=\frac{\lambda M_{\left\langle \alpha _{k}\right\rangle }^{1/p-1}}{%
M_{\left\vert \alpha _{k}\right\vert }^{1/p-1}}.
\end{equation}

If we apply (\ref{4.18}) and (\ref{4.18s}) in the case when $ \lambda_{k} $ are defined by (\ref{4.18ss}) we conclude that (\ref{2aa}) is satisfied and by
using Lemma \ref{example2.6} we obtain that\ $f\in H_{p}.$

By using (\ref{4.18ss}) with $\lambda _{k}$ defined by (\ref{4.18ss}) we get
that%
\begin{eqnarray}  \label{4.21}
&&\omega (\frac{1}{M_{\left\vert \alpha _{k}\right\vert }},f)_{H_{p}(G_m)} \leq
\sum\limits_{i=k}^{\infty }\frac{M_{\left\langle \alpha
_{i}\right\rangle}^{1/p-1}}{M_{\left\vert \alpha _{i}\right\vert }^{1/p-1}}= O\left( \frac{M_{\left\langle \alpha _{k}\right\rangle }^{1/p-1}}{%
M_{\left\vert \alpha _{k}\right\vert }^{1/p-1}}\right), \text{ as \ }%
k\rightarrow \infty .
\end{eqnarray}

By applying (\ref{11AA}) with $\lambda _{k}$ defined by (\ref{4.18ss}) we
get that
\begin{eqnarray*}
&& S_{\alpha _{k}}f = S_{M_{\left\vert \alpha _{k}\right\vert }}+{M_{\left\langle \alpha _{k}\right\rangle  }^{1/p-1}\psi _{M_{\left\vert \alpha _{k}\right\vert }}D_{j-M_{_{\left\vert \alpha _{k}\right\vert }}}} 
\end{eqnarray*}

In view of (\ref{estdn0}) we can conclude that%
\begin{equation*}
\left\vert D_{\alpha _{k}-M_{\left\vert \alpha _{k}\right\vert}}\right\vert \geq M_{\left\langle \alpha
_{k}\right\rangle },\text{ \ for \ } I_{\left\langle \alpha
_{k}\right\rangle}\setminus I_{\left\langle \alpha _{k}\right\rangle +1}
\end{equation*}
and
\begin{eqnarray} \label{4.2222}
&& M_{\left\langle \alpha
_{k}\right\rangle }\mu \left\{ x\in G_{m}:\text{ }\left\vert D_{\alpha _{k}-M_{\left\vert \alpha _{k}\right\vert}}\right\vert
\geq M_{\left\langle \alpha
_{k}\right\rangle }\right\}   \\ \notag
&&M_{\left\langle \alpha
_{k}\right\rangle }\mu \left\{ I_{\left\langle \alpha _{k}\right\rangle
}\backslash I_{\left\langle \alpha _{k}\right\rangle +1}\right\}\geq M_{\left\langle \alpha
_{k}\right\rangle }^{1-p}.
\end{eqnarray}
If we combine Corollary  \ref{corollary4.5.222} and (\ref{4.2222}), for the sufficiently large $ k $, we can write that
\begin{eqnarray}
&&\Vert S_{\alpha _{k}}f-f\Vert _{L_{p,\infty }} 
\geq  M_{\left\langle \alpha _{k}\right\rangle }^{1/p-1}\Vert D_{\alpha
_{k}}\Vert _{L_{p,\infty }}-\Vert S_{M_{\left\vert \alpha _{k}\right\vert }}f-f\Vert_{L_{p,\infty }}  \notag \\
&\geq & \frac{M_{\left\langle \alpha _{k}\right\rangle }^{1/p-1}\Vert D_{\alpha_{k}}\Vert _{L_{p,\infty }}}{2}\geq c,\text{ \ \ as \ \ } k\rightarrow \infty .  \notag
\end{eqnarray}

The proof is complete.
\end{proof}

Next, we present simple consequence of Theorem \ref{theorem4.7a} which was proved in Tephnadze \protect\cite{tep5}:
\begin{corollary}
\label{corollary4.7}a) Let $0<p<1,$ $f\in H_{p}$ and%
\begin{equation*}
\omega \left( \frac{1}{M_{n}},f\right)_{H_{p}(G_m)} =o\left( \frac{1}{%
M_{n}^{1/p-1}}\right) ,\text{ as }n\rightarrow \infty .
\end{equation*}%
Then
\begin{equation*}
\left\Vert S_{k}f-f\right\Vert _{H_{p}}\rightarrow 0,\,\,\,\text{as \ }%
\,\,\,k\rightarrow \infty .
\end{equation*}%
\textit{b) For every }$0<p<1$\textit{\ there exists martingale }$f\in H_{p}$%
\textit{, for which}
\begin{equation*}
\omega \left( \frac{1}{M_{n}},f\right) =O\left( \frac{1}{%
M_{n}^{1/p-1}}\right)_{H_{p}(G_m)} ,\text{ \ as \ \ \ }n\rightarrow \infty
\end{equation*}%
\textit{and}
\begin{equation*}
\left\Vert S_{k}f-f\right\Vert _{L_{p,\infty }}\nrightarrow 0,\,\,\,\text{%
as\thinspace \thinspace \thinspace }k\rightarrow \infty .
\end{equation*}
\end{corollary}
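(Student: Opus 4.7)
The plan is to derive both parts as direct consequences of Theorem \ref{theorem4.7a}. For part (a), I would invoke the estimate from part (a) of that theorem, namely $\|S_n f - f\|_{H_p} \leq c_p (M_{|n|}^{1/p-1}/M_{\langle n\rangle}^{1/p-1})\,\omega(1/M_{|n|},f)_{H_p(G_m)}$, and discard the denominator by observing that $M_{\langle n\rangle}^{1/p-1} \geq 1$ (since $M_j \geq 1$ for all $j$ and $1/p - 1 > 0$). This collapses the bound to $\|S_n f - f\|_{H_p} \leq c_p M_{|n|}^{1/p-1}\,\omega(1/M_{|n|},f)_{H_p(G_m)}$. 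Since $|n|\to\infty$ whenever $n\to\infty$, and the standing hypothesis is precisely $M_n^{1/p-1}\,\omega(1/M_n,f)_{H_p(G_m)} = o(1)$, the right-hand side tends to $0$.

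For part (b), I would apply Theorem \ref{theorem4.7a}(b) to the explicit sequence $n_k := M_k + 1$. The identities $|M_k+1| = k$, $\langle M_k+1\rangle = 0$ and $\rho(M_k+1) = k$, already recorded in the proof of Corollary \ref{corollary4.5.1}, confirm that condition (\ref{ak}) holds. The theorem then furnishes a martingale $f \in H_p$ and a subsequence $\{\alpha_k\} \subset \{n_k\}$ satisfying (\ref{mag}) and the divergence assertion (\ref{cond9}). The gain from this particular choice is that each $\alpha_k$ inherits the form $M_{|\alpha_k|}+1$, so $\langle \alpha_k\rangle = 0$ and the factor $M_{\langle \alpha_k\rangle}^{1/p-1}$ in (\ref{mag}) becomes $1$; along the subsequence we thereby obtain the clean estimate $\omega(1/M_{|\alpha_k|}, f)_{H_p(G_m)} = O(1/M_{|\alpha_k|}^{1/p-1})$.

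The main technical obstacle is upgrading this bound from the sparse index set $\{|\alpha_k|\}$ to every $n\in\mathbb{N}$, as the corollary requires. My plan is to exploit the explicit construction of $f$ supplied by Lemma \ref{example2.6}: formula (\ref{2aa0}) gives $\omega(1/M_n, f)_{H_p(G_m)} = O((\sum_{|\alpha_k|\geq n}|\lambda_k|^p)^{1/p})$, and with the coefficients $\lambda_k$ chosen as in (\ref{4.18ss}) --- which here reduce to $\lambda_k \sim 1/M_{|\alpha_k|}^{1/p-1}$ because $\langle \alpha_k\rangle = 0$ --- the rapid growth (\ref{4.18s}) of $M_{|\alpha_k|}$ renders the series lacunary and dominated by its first nonzero term $C/M_{|\alpha_{k_0}|}^{1-p}$, where $k_0 := \min\{k : |\alpha_k| \geq n\}$. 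Since $M_{|\alpha_{k_0}|} \geq M_n$, this produces the uniform bound $\omega(1/M_n, f)_{H_p(G_m)} = O(1/M_n^{1/p-1})$ demanded in part (b), and the divergence conclusion (\ref{cond9}) immediately yields $\|S_k f - f\|_{L_{p,\infty}} \nrightarrow 0$, completing the proof.
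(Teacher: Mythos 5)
Your proposal is correct and follows the paper's intended route: the paper offers no separate proof of this corollary, presenting it as a direct consequence of Theorem \ref{theorem4.7a} applied with $n_k = M_k+1$ (so that $\left\langle n_k\right\rangle = 0$ and the factor $M_{\left\langle n_k\right\rangle }^{1/p-1}$ disappears), which is exactly what you do. The one point you rightly flag as needing care --- upgrading the modulus bound from the sparse indices $\left\vert \alpha _{k}\right\vert$ to all $n$ via (\ref{2aa0}) and the lacunarity condition (\ref{4.18s}) --- is the same tail-domination argument already used to establish (\ref{4.21}) inside the proof of Theorem \ref{theorem4.7a}, so your treatment matches the paper's.
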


Finally, we present equivalent conditions for the modulus of continuity in terms of measurable properties of a Dirichlet kernel, which provide boundedness  of the subsequences of partial sums with respect to the Vilenkin system of $f\in H_{p}$ martingales:

\begin{corollary}
\label{Corollary4.7a}Let $0<p<1,$ $f\in H_{p}\ \ $and $M_{k}<n\leq M_{k+1}.$
Then there exists an absolute constant $c_{p}$, depending only on $p,$ such
that
\begin{equation*}
\left\Vert S_{n}f-f\right\Vert _{H_{p}}\leq c_{p}\left( n\mu \left( \text{%
supp}D_{n}\right) \right) ^{1/p-1}\omega _{H_{p}}\left( \frac{1}{M_{k}}%
,f\right) ,\ \ \ \left( 0<p<1\right)
\end{equation*}

Moreover, if $\{n_{k}:k\in\mathbb{N}\}$\textit{\ be a sequence of nonnegative integers such that}
\begin{equation*}
\omega\left( \frac{1}{M_{\left\vert n_{k}\right\vert }},f\right)_{H_{p}(G_m)}
=o\left( \frac{1}{\left( n_{k}\mu \left( \text{supp}D_{n_{k}}\right) \right)
^{1/p-1}}\right) ,\text{ as \ }k\rightarrow \infty,
\end{equation*}%
then (\ref{con1pa}) holds.

b) Let $\{n_{k}:k\in\mathbb{N}\}$ be an increasing sequence of nonnegative
integers such that condition (\ref{ak}) is satisfied. Then there exists a
martingale $f\in H_{p} $ and a subsequence $\{\alpha _{k}:k\in\mathbb{N}%
\}\subset \{n_{k}:k\in\mathbb{N}\},$ for which
\begin{equation*}
\omega\left( \frac{1}{M_{\left\vert \alpha _{k}\right\vert }}%
,f\right)_{H_{p}(G_m)} =O\left( \frac{1}{\left( \alpha _{k}\mu \left( \text{supp}%
D_{\alpha _{k}}\right) \right) ^{1/p-1}}\right) \text{\ },\,\,\,\text{as\thinspace \thinspace
\thinspace }k\rightarrow \infty
\end{equation*}%
\textit{and}
\begin{equation*}
\limsup\limits_{k\rightarrow \infty }\left\Vert S_{\alpha
_{k}}f-f\right\Vert _{L_{p,\infty }}>c>0,\,\,\,\text{as\thinspace \thinspace
\thinspace }k\rightarrow \infty .
\end{equation*}
\end{corollary}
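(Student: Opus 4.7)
The plan is to reduce Corollary \ref{Corollary4.7a} to Theorem \ref{theorem4.7a} by using the same two-sided equivalence that was already established in the proof of Corollary \ref{ndn}. Specifically, combining \eqref{3dn}, \eqref{2dn} with $M_{|n|}\leq n<M_{|n|+1}$ yields
\begin{equation*}
\frac{M_{\left\vert n\right\vert }^{1/p-1}}{2M_{\left\langle n\right\rangle }^{1/p-1}}\leq \left( n\mu \left\{ \text{supp}\left( D_{n}\right) \right\}\right) ^{1/p-1}\leq \frac{\lambda ^{1/p-1}M_{\left\vert n\right\vert}^{1/p-1}}{M_{\left\langle n\right\rangle }^{1/p-1}},
\end{equation*}
where $\lambda=\sup_{n\in\mathbb{N}}m_n$. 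This allows me to interchange the quantities $(n\mu(\text{supp}\,D_n))^{1/p-1}$ and $M_{|n|}^{1/p-1}/M_{\langle n\rangle}^{1/p-1}$ up to absolute constants in every inequality that appears.

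For part a), I would first substitute the upper estimate above into the right-hand side of \eqref{mag0a} in Theorem \ref{theorem4.7a}, obtaining directly
\begin{equation*}
\left\Vert S_{n}f-f\right\Vert _{H_{p}}\leq c_{p}\left( n\mu \left( \text{supp}\,D_{n}\right) \right) ^{1/p-1}\omega \left( \frac{1}{M_{k}},f\right)_{H_{p}(G_m)}.
\end{equation*}
The convergence statement follows from the same substitution: if $\omega(1/M_{|n_k|},f)_{H_p}=o((n_k\mu(\text{supp}\,D_{n_k}))^{-(1/p-1)})$, then by the lower estimate we also have $\omega(1/M_{|n_k|},f)_{H_p}=o(M_{\langle n_k\rangle}^{1/p-1}/M_{|n_k|}^{1/p-1})$, which is exactly the hypothesis under which part a) of Theorem \ref{theorem4.7a} gives $\|S_{n_k}f-f\|_{H_p}\to 0$.

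For part b), I would invoke part b) of Theorem \ref{theorem4.7a} directly: it produces a martingale $f\in H_p$ and a subsequence $\{\alpha_k\}\subset\{n_k\}$ satisfying \eqref{mag} and \eqref{cond9}. The divergence statement $\limsup_k\|S_{\alpha_k}f-f\|_{L_{p,\infty}}>c>0$ is already in the desired form. The modulus estimate \eqref{mag} reads
\begin{equation*}
\omega \left( \frac{1}{M_{\left\vert \alpha _{k}\right\vert }},f\right)_{H_{p}(G_m)} =O\left( \frac{M_{\left\langle \alpha _{k}\right\rangle }^{1/p-1}}{M_{\left\vert \alpha _{k}\right\vert }^{1/p-1}}\right),
\end{equation*}
and using the lower estimate of the two-sided bound this is equivalent to
\begin{equation*}
\omega \left( \frac{1}{M_{\left\vert \alpha _{k}\right\vert }},f\right)_{H_{p}(G_m)} =O\left( \frac{1}{\left( \alpha _{k}\mu \left( \text{supp}\,D_{\alpha _{k}}\right) \right) ^{1/p-1}}\right),
\end{equation*}
as required.

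There is essentially no obstacle, since both parts are direct translations via the equivalence displayed above; the only thing to be careful about is the direction of the inequality used in each step (upper bound for part a), lower bound to convert the conclusion of part b) into the desired form). Everything else is an immediate application of Theorem \ref{theorem4.7a}.
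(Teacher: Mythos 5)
Your proposal is correct and matches the paper's (implicit) argument: the paper offers no separate proof of this corollary, relying precisely on the two-sided comparison between $\left( n\mu \left( \mathrm{supp}\,D_{n}\right) \right)^{1/p-1}$ and $M_{\left\vert n\right\vert }^{1/p-1}/M_{\left\langle n\right\rangle }^{1/p-1}$ already established in the proof of Corollary \ref{ndn}, applied now to Theorem \ref{theorem4.7a} instead of Theorem \ref{theorem4.15}. One cosmetic remark: the labels ``upper'' and ``lower'' you attach to the sides of the comparison used in parts a) and b) are swapped relative to the displayed chain, but since you invoke the full two-sided equivalence this has no effect on correctness.
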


\end{document}